 \newtheorem{thm}{Theorem}
 \newtheorem{lem}{Lemma}
 \newtheorem{cor}{Corollary}
 \newtheorem{rem}{Remark}
\title{Profile of a self-similar growth-fragmentation}
\author{François G. Ged}
\date{}
\newcommand{\intervalleff}[2]{\left[#1\,,#2\right]}
\newcommand{\intervallefo}[2]{\left[#1\,,#2\right)}
\newcommand{\intervalleof}[2]{\left(#1\,,#2\right]}
\newcommand{\intervalleoo}[2]{\left(#1\,,#2\right)}
\begin{document}
	\maketitle
	\vspace{-1.1cm}
	\begin{center}
		\textit{Institut für Mathematik, Universität Zürich, Switzerland.}
	\end{center}
	\begin{abstract}
			A self-similar growth-fragmentation describes the evolution of particles that grow and split as time passes.
			Its genealogy yields a self-similar continuum tree endowed with an intrinsic measure.
			Extending results of Haas \cite{H04} for pure fragmentations, we relate the existence of an absolutely continuous profile to a simple condition in terms of the index of self-similarity and the so-called cumulant of the growth-fragmentation. 
			When absolutely continuous, we approximate the profile by a function of the small fragments, and compute the Hausdorff dimension in the singular case.
			Applications to Boltzmann random planar maps are emphasized, exploiting recently established connections between growth-fragmentations and random maps \cite{BBCK16,BCK15}. 
	\end{abstract}
	
	\footnotesize\textit{Keywords:} Self-similar growth-fragmentations; Intrinsic area; Profile of a tree
	
	\textit{Classification MSC:} 60J25; 60G18; 60G30 \normalsize
	\section{Introduction}

	The main purpose of this work is to answer, for a specific family of continuous random trees (CRT in short), the following general question about measured metric spaces. If $m(r)$ denotes the measure assigned to the ball centered at some fixed distinguished point and with radius $r\geq 0$, is the non-decreasing function $m$ absolutely continuous with respect to the Lebesgue measure on $\intervallefo{0}{\infty}$ ?
	When the answer is positive, the density $m'(r)$ can then be viewed as the measure of the sphere with radius $r$. When further the metric space is a continuum tree, the density $m'$ is sometimes known as the \textit{profile} of the tree.
	
	This question has been answered by Haas \cite{H04} for the class of self-similar fragmentation trees, which notably includes Aldous' CRT. Recall that a conservative self-similar fragmentation describes the evolution of a branching particle system such that at every branching event, the sum of the masses of the children coincides with the mass of the parent, and self-similarity refers to the property that the evolution of a particle with mass $x>0$ is a scaling transformation (depending on an index $\alpha\in\mathbb{R}$) of that of a particle with unit mass. Informally, Haas and Miermont \cite{HM04} associated to a conservative self-similar fragmentation with index $\alpha<0$ a self-similar continuous random tree which is further naturally equipped with a root and a probability mass measure, and Haas \cite{H04} proved that under some very minor hypotheses, the non-decreasing function $m$ is then absolutely continuous if $\alpha>-1$, and singular if $\alpha\leq-1$.
	
	The present work should be viewed as a generalization of \cite{H04} to self-similar \textit{growth-fragmentations}, introduced by Bertoin \cite{B17}. As the name suggests, the latter extend pure fragmentations by incorporating a growth element in the dynamic of particles, and this changes deeply the behaviour of the system. Rembart and Winkel \cite{RW16} constructed recently the CRT's which describe the genealogy of self-similar growth-fragmentations with index $\alpha<0$, whereas the so-called intrinsic area measure was introduced in \cite{BBCK16}.
	
	The motivation of the present work is not just getting a formal extension of the results of Haas; it also stems from the connection between random surfaces and growth-fragmentations as we shall now explain informally. It was pointed out in \cite{BBCK16} and \cite{BCK15} that for certain random surfaces with a boundary, the process obtained by slicing the surface at fixed distances from the boundary and measuring the lengths of the resulting cycles yields a self-similar growth-fragmentation with negative index. One might then expect that, just as for smooth surfaces, the area $A(r)$ of the components at distance at most $r$ from the boundary can then be recovered by integrating the total cycle lengths at height $0\leq r'\leq r$; that is, that the non-decreasing function $r\mapsto A(r)$ is absolutely continuous with density given by the total cycle lengths. It turns out that this intuition is wrong in general, and it is thus natural to wonder whether nonetheless the absolute continuity of $A(\cdot)$ holds.
	
	The law of a growth-fragmentation is determined by the index of self-similarity and the so-called cumulant function $\kappa$. (more details are given in Section \ref{Section preliminaries}).
	Our main result is stated in terms of $\alpha$ and the smallest root $\omega_-$ of $\kappa$. More precisely, whilst the critical value is -1 for pure fragmentations, we show that the genealogical CRT of a growth-fragmentation has an absolutely continuous profile as soon as $\alpha>-\omega_-$, whereas it is singular if $\alpha\leq -\omega_-$. In particular, we shall see that for the whole family of random maps considered in \cite{BBCK16}, the function $t\mapsto A(t)$ is absolutely continuous.
	
	The paper is organized as follows. In Section \ref{Section preliminaries}, we recall the settings of \cite{BBCK16}. This includes the definition of a growth-fragmentation and its CRT. The construction of the intrinsic area measure from the branching random walk following in generations the collection of particles at birth is recalled. A loose description of the spinal decomposition is also given.
	
	Section \ref{Section Regularity of the intrinsic area process} is divided in four subsections. The first one contains our main result. The second subsection is a toolbox that recalls basic properties of the major ingredients of the proof, which is given in the third subsection. A simple corollary on the number of fragments is stated in the fourth subsection.
	
	We dwell on the absolutely continuous case in Section \ref{Section Approximation of the density} and we see that, modulo few adjustments, the proof of Haas adapts to show that the profile can be approximated by small (or equivalently relatively large) fragments.
	
	Finally, Section \ref{Section Hausdorff dimension} is devoted to the Hausdorff dimension of $\mathrm{d}A$ when singular. We obtain the lower bound from Frostman's Lemma, and derive the upper bound from the Hausdorff dimension of the leaves of the CRT, obtained by Rembart and Winkel \cite{RW16}.
	
	In Appendix are shown two technical lemmas, including the Feller property of the growth-fragmentation (Lemma \ref{Lemma Feller}), which is needed for the arguments of Haas to apply in Section \ref{Section Approximation of the density}.
	
	\section{Preliminaries}\label{Section preliminaries}
	
	\paragraph{The cell-system.}
	
	We consider a positive self-similar Markov process $X$ with index $\alpha< 0$, in the sense that its law $\mathbb{P}_x$ started from $X_0=x>0$ is the same as that of $(xX_{tx^{\alpha}})_{t\geq 0}$ under $\mathbb{P}_1$. We assume that $X$ converges almost surely to 0. Lamperti's transformation \cite{L72} enables us to view $X$ as a time changed of $\exp(\xi)$, where $\xi$ is a L\'evy process. As a consequence the lifetime of $X$, i.e. the first hitting time of the absorbing state 0, is given by an exponential functional of $\xi$ (we shall provide more details later on).

	We follow Bertoin's construction \cite{B17} of the cell-system driven by $X$: let $\chi_{\emptyset}:=(\chi_{\emptyset}(t))_{t\geq 0}$ have law $\mathbb{P}_1$. The process $\chi_{\emptyset}$ is viewed as the size of the Eve cell $\emptyset$, evolving in time. Its birth-time $b_{\emptyset}$ is taken to be 0. Let $(b_i)_{i\geq 1}$ be an exhausting sequence of its negative jump times and let $(\Delta_i)_{i\geq 1}$ be the corresponding sequence of the absolute values of the sizes of its negative jumps (the existence is ensured by the fact that $\chi_{\emptyset}$ converges to 0 almost surely).
	Each negative jump is interpreted as the birth of a new cell, that is at time $b_i$ a cell labeled $i$ is born and evolves independently of the other cells, with law $\mathbb{P}_{\Delta_i}$. The other generations are defined recursively in the same manner, using the Ulam-Harris-Neveu notation, that is every cell is labeled by some $u\in\mathbb{U}:=\bigcup_{n\geq 0}\mathbb{N}^n$. We denote $|u|$ its generation and $u(k)$ its ancestor at generation $k\leq|u|$ (by convention, $|\emptyset|=0$).
	For $x>0$, we denote $\mathcal{P}_x$ the law of the cell-system starting from a single cell of initial size $x$. Similarly, if $\underline{x}:=(x_1,x_2,\cdots)$ is a non-decreasing null sequence, $\mathcal{P}_{\underline{x}}$ is the distribution of a cell-system starting from independent cells of sizes $x_1,x_2,\cdots$.

	\paragraph{The branching random walk.}
	
	Define the collections of logarithms of cells at birth, indexed by generations, as
	\begin{align*}
		\mathcal{Z}_n:=\left\{\!\!\left\{\mathrm{ln}\chi_u(0):u\in\mathbb{N}^n\right\}\!\!\right\},
	\end{align*}
	where $\left\{\!\!\left\{\cdots\right\}\!\!\right\}$ refers to multiset, meaning that the elements are repeated according to their multiplicities.
	Thanks to self-similarity, $(\mathcal{Z}_n)_{n\geq 1}$ is a branching random walk.
	Let $(b,\sigma^2,\Lambda)$ be the characteristics of the L\'evy process $\xi$ and assume that there exists $p>0$ such that $\int_1^\infty e^{py}\Lambda(\mathrm{d}y)<\infty$ (we also assume that $\Lambda(\intervalleoo{-\infty}{0})>0$ as there are no children otherwise). We thus have that the Laplace exponent of $\xi$, given by $\psi(q):=\log\mathbb{E}(\exp(q\xi(1)))$ is finite at least on $\intervalleff{0}{p}$ (see e.g. \cite{K14} Theorem 3.6); we set $\psi(q)=\infty$ whenever the expectation is infinite. The so-called \textit{cumulant function} is defined as
	\begin{align}\label{equation cumulant kappa}
		\kappa:q\mapsto\psi(q)+\int_{\intervalleoo{-\infty}{0}}(1-e^y)^q\Lambda(\mathrm{d}y),\qquad q>0.
	\end{align}
	The mean Laplace transform of $\mathcal{Z}_1$ is then given by $q\mapsto 1-\kappa(q)/\psi(q)$, when this makes sense (see \cite{B17} Lemma 3). Hence, as soon as $\kappa(q)=0$, the process $(\sum_{|u|=n}\chi_u(0)^q)_{n\geq 0}$ is a martingale.
	We thus naturally assume that there exists $\omega_->0$ such that\footnote{In the context of branching random walks, this assumption is known as the Cram\'er hypothesis and $\omega_-$ is called the Malthusian parameter.}
	\begin{align}\label{Cramer hypothesis}
		\kappa(\omega_-)=0,\qquad
		-\infty<\kappa'(\omega_-)<0.
	\end{align}
	The so-called \textit{intrinsic martingale} introduced in \cite{BBCK16} is then defined as
	\begin{align*}
		\mathcal{M}(n):=\sum_{|u|=n}\chi_u(0)^{\omega_-},\qquad n\geq 0.
	\end{align*}
	This martingale is moreover uniformly integrable with mean 1 under $\mathcal{P}_1$ (see \cite{BBCK16} Lemma 2.3).
	We shall also denote $\omega_+:=\sup\left\{q\geq 0:\kappa(q)<0\right\}$, which is strictly greater than $\omega_-$ thanks to \eqref{Cramer hypothesis}. (In \cite{BBCK16}, $\omega_+$ is a second root of $\kappa$, which if it exists, is consistent with our definition.)
	Finally, we rule out the case where $X$ is the negative of a subordinator, as this induces fragmentation processes, which are fully addressed by \cite{H04}.

	\paragraph{The Ulam tree, the CRT and the intrinsic area measure.}
	
	In \cite{BBCK16}, the authors define a random measure on the boudary of the Ulam tree $\partial\mathbb{U}$, which is the set of infinite integer sequences, endowed with the distance $d(\ell,\ell'):=\exp(-\sup\{n\geq 0:\ell(n)=\ell'(n)\})$ which makes it a complete metric space (recall that $\ell(n)$ denotes the ancestor of $\ell$ at generation $n$). Specifically, for every $u\in\mathbb{U}$ with $|u|=n$, let $B(u):=\{\ell\in\partial\mathbb{U}:\ell(n)=u\}$ be the ball in $\partial\mathbb{U}$ generated by $u$. The \textit{intrinsic area measure} on $\partial\mathbb{U}$ is then defined by
	\begin{align*}
		\mathcal{A}(B(u)):=\lim_{k\to\infty}\sum_{|v|=k,v(n)=u}\chi_v(0)^{\omega_-}.
	\end{align*}
	(This is well-defined thanks to the uniform integrability of $(\mathcal{M}(n))_{n\geq 0}$ and the branching property.)
	The total mass is denoted $\mathcal{M}:=\lim_{n\to\infty}\mathcal{M}(n)=\mathcal{A}(\partial\mathbb{U})$.
	
	Rembart and Winkel \cite{RW16} built a CRT from the cell-system, that is very similar to $\partial\mathbb{U}$.
	The construction is as follows: construct a first segment of length equal to the lifetime $\zeta_{\emptyset}$ of $\emptyset$, endowed with a metric corresponding to the age of the cell. It means that each point of this branch corresponds to the Eve cell at a particular time of its life; the root $\rho$ is thus naturally taken to be the point corresponding to 0. On this branch, at every jump location $b_i$, glue a new branch of length equal to the lifetime $\zeta_i$ of the cell $i$, with the corresponding metric. This yields a CRT $(\mathcal{T}_1,d_1)$. For all $n\geq 1$, to obtain $(\mathcal{T}_{n+1},d_{n+1})$, repeat this procedure on every branch $u\in\mathbb{N}^n$ at locations $\left\{b_{uj}:j\geq 1\right\}$.
	
	Theorem 1.7 in \cite{RW16} shows that, whenever $\psi(-\alpha)<0$, $(\mathcal{T}_n,d_n)_{n\geq 1}$ converges almost surely in the Gromov-Hausdorff topology to some compact CRT $(\mathcal{T},d)$. Eventhough it is not explicitely given in their construction, there is a very natural way to define simultaneously the analogue of the intrinsic area measure on $\mathcal{L}(\mathcal{T})$, the set of leaves of $\mathcal{T}$, that we now introduce. Fix $n\geq 1$ and consider $\mathcal{T}_n$. For every $u\in\mathbb{N}^n$ and $j\geq 1$, put a mass $\chi_{uj}(0)^{\omega_-}$ at location $b_{uj}$ on the branch of $u$. This defines a measure $\mathcal{A}_n$ on $\mathcal{T}_n$, with total mass given by $\mathcal{M}(n+1)$. As for the Ulam tree, it is clear that $(\mathcal{A}_n)_{n\geq 1}$ converges weakly toward a measure $\mathcal{A}_{\mathcal{T}}$ with total mass $\mathcal{M}$ and supported on $\mathcal{L}(\mathcal{T})$, the set of leaves of $\mathcal{T}$. The correspondance between $\mathcal{T}$ and $\overline{\mathbb{U}}:=\mathbb{U}\cup\partial\mathbb{U}$ is straightforward from the two constructions, that is every $x\in\mathcal{T}$ corresponds to either a unique $\chi_u(t)$ for some $u\in\mathbb{U}, t\in\intervalleff{0}{\zeta_u}$, or a unique $\ell\in\partial\mathbb{U}$. In particular, $\mathcal{A}_{\mathcal{T}}$ and $\mathcal{A}$ are essentially the same, this is even clearer when looking at the masses at heights. 
	
	Recall that the height function on $\mathcal{T}$ is defined as the distance to the root
	\begin{align*}
		\mathrm{ht}(x):=d(\rho,x).
	\end{align*}
	We then define $A_{\mathcal{T}}:\mathbb{R}_+\to\mathbb{R}_+$ by 
	\begin{align}\label{equation def A}
	&A_{\mathcal{T}}:t\mapsto\mathcal{A}_{\mathcal{T}}(\left\{\ell\in\mathcal{L}(\mathcal{T}):\mathrm{ht}(\ell)\leq t\right\}).\nonumber\\
	\shortintertext{This coincides exactly with}
	&A:t\mapsto\mathcal{A}(\left\{\ell\in\partial\mathbb{U}:\zeta_\ell\leq t\right\}),
	\end{align}
	where $\zeta_\ell:=\lim_{n\to\infty}b_{\ell(n)}$. (Actually, $\mathcal{L}(\mathcal{T})$ also contains cells at death-times, but they do not generate area since there are only countably many.)
	Since the cell system carries more informations, we shall rather work with $A$ than $A_{\mathcal{T}}$.
	
	The elements of $\mathcal{T}\setminus\mathcal{L}(\mathcal{T})$ at a fixed height $t\geq 0$ correspond to the collection of cells alive at time $t$:
	\begin{align*}
		\mathbf{X}(t):=\left\{\!\!\left\{\chi_u(t-b_u):u\in\mathbb{U},b_u\leq t<b_u+\zeta_u\right\}\!\!\right\}.
	\end{align*}
	This is the definition of Bertoin \cite{B17} of the growth-fragmentation process induced by the cell-system.
	Shi \cite{S17} showed that the distribution of $\mathbf{X}$ is characterized by the pair $(\kappa,\alpha)$.\footnote{However, this is not the case of the distribution of the cell-system.}
	The lifetime of $\mathbf{X}$ is defined as $\zeta:=\inf\left\{t>0:\mathbf{X}(t)=\emptyset\right\}$.
	In \cite{B17}, it is shown that the Cram\'er hypothesis \eqref{Cramer hypothesis} ensures that the following properties hold:
	\begin{enumerate}[label=\textbullet]
		\item Almost surely, for any fixed $\epsilon>0$, there are finitely many fragments larger than $\epsilon$ in $\mathbf{X}(t)$ for all $t\geq 0$.
		\item $\zeta<\infty$ almost surely. (\cite{B17} Corollary 3)
		\item $\mathbf{X}$ enjoys the self-similarity and branching properties, as stated in \cite{B17} Theorem 2.
	\end{enumerate}
	As a consequence, $\mathcal{T}$ satisfies a Markov-branching type property, that we express in terms of $A$ as follows: let $t\geq 0$ and let $(A_i)_{i\geq 1}$ be a sequence of i.i.d. copies of $A$, independent of $(\mathbf{X}(u);u\leq t)$, then for all $s\geq 0$:
	\begin{align}\label{equation branching + self-similarity of A}
		A(t+s)-A(t)\stackrel{d}{=}\sum_{i\geq 1}X_i(t)^{\omega_-}A_i(sX_i(t)^{\alpha}),
	\end{align}
	where for $i\geq 1$, $X_i(t)$ denotes the size of the $i$th largest fragment in $\mathbf{X}(t)$ (being possibly 0).

	\paragraph{Spinal decomposition}
	We now give an informal description of the spinal decomposition induced by $\mathcal{A}$, introduced in \cite{BBCK16} Section 4. The statements are provided without proof, the reader is refered to this paper for a rigorous treatment.
	
	We introduce a probability measure $\widehat{\mathcal{P}}_1$ describing the joint distribution of a cell-system and a random leaf $\sigma\in\partial\mathbb{U}$. Under $\widehat{\mathcal{P}}_1$, the law of the cell-system is absolutely continuous with respect to $\mathcal{P}_1$, with density $\mathcal{M}$. The random leaf $\sigma$ is then tagged according to the intrinsic area. In particular we have
	
	\begin{lem}\label{Lemma A and random leaf}
		Under $\widehat{\mathcal{P}}_1$ and conditionally on the cell-system, the probability measure $\mathrm{d}A/\mathcal{M}$ satisfies
		\begin{align*}
			\frac{\mathrm{d}A(t)}{\mathcal{M}}
			=\widehat{\mathcal{P}}_1\left(\zeta_\sigma\in\mathrm{d}t|(\chi_u)_{u\in\mathbb{U}}\right).
		\end{align*}
	\end{lem}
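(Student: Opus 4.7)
The plan is to unpack the construction of $\widehat{\mathcal{P}}_1$ recalled above and realise $\mathrm{d}A/\mathcal{M}$ as the push-forward by the map $\ell\mapsto\zeta_\ell$ of the conditional distribution of the tagged leaf $\sigma$. In other words, I would show that the claim is essentially a bookkeeping of the definition of ``tagging according to intrinsic area'' combined with the definition \eqref{equation def A} of $A$.

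First, I would record precisely what ``$\sigma$ is tagged according to the intrinsic area'' means: under $\widehat{\mathcal{P}}_1$ and conditionally on the cell-system $(\chi_u)_{u\in\mathbb{U}}$, the random leaf $\sigma\in\partial\mathbb{U}$ has distribution $\mathcal{A}/\mathcal{M}$. This is well-defined because $\mathcal{M}>0$ $\widehat{\mathcal{P}}_1$-almost surely (the $\mathcal{P}_1$-density of the cell-system marginal is $\mathcal{M}$, hence $\widehat{\mathcal{P}}_1$ assigns zero mass to $\{\mathcal{M}=0\}$). I would then check that the map $\ell\mapsto\zeta_\ell=\lim_{n\to\infty}b_{\ell(n)}$ is Borel on $\partial\mathbb{U}$: for each $n$, $\ell\mapsto b_{\ell(n)}$ depends only on the first $n$ coordinates, hence is continuous for $d$, and the monotone limit is measurable.

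The main computation is then immediate. For every $t\geq 0$, using the conditional law of $\sigma$ and then \eqref{equation def A},
\begin{align*}
	\widehat{\mathcal{P}}_1\bigl(\zeta_\sigma\leq t\bigm|(\chi_u)_{u\in\mathbb{U}}\bigr)
	=\int_{\partial\mathbb{U}}\mathbf{1}_{\{\zeta_\ell\leq t\}}\,\frac{\mathcal{A}(\mathrm{d}\ell)}{\mathcal{M}}
	=\frac{\mathcal{A}(\{\ell\in\partial\mathbb{U}:\zeta_\ell\leq t\})}{\mathcal{M}}
	=\frac{A(t)}{\mathcal{M}}.
\end{align*}
Since $A$ is non-decreasing and right-continuous, the family $\{A(t)/\mathcal{M}\}_{t\geq 0}$ is a conditional cumulative distribution function, and the associated Stieltjes measure is $\mathrm{d}A/\mathcal{M}$. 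This identifies the conditional law of $\zeta_\sigma$ with $\mathrm{d}A/\mathcal{M}$, which is the claim.

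I do not foresee any serious obstacle: the only subtle points are the measurability of $\ell\mapsto\zeta_\ell$ and the fact that $\mathcal{A}$ does not charge the countable set of death-time cells in $\mathcal{L}(\mathcal{T})\setminus\partial\mathbb{U}$, so that integrating $\mathbf{1}_{\{\zeta_\ell\leq t\}}$ against $\mathcal{A}$ gives exactly $A(t)$, as already noted in the parenthesis following \eqref{equation def A}.
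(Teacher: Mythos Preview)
Your argument is correct. The paper itself does not prove this lemma: it is stated in the preliminaries paragraph on the spinal decomposition with the remark that ``the statements are provided without proof, the reader is refered to this paper for a rigorous treatment'' (the reference being \cite{BBCK16}). Your proof simply unpacks the definition of ``tagged according to the intrinsic area'' and the definition \eqref{equation def A} of $A$, which is exactly the intended content of the lemma; there is nothing more to it than the push-forward identity you wrote down.
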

	
	Let $\phi:q\mapsto\kappa(\omega_-+q),\ q\geq 0$. It is known that $\phi$ can be viewed as the Laplace exponent of a L\'evy process\footnote{This fact is stated in \cite{BBCK16} Lemma 2.1 for $q\mapsto\kappa(\omega_++q)$, however it is also true for $\phi$ by the same arguments.} that we denote $\eta$. We then define the positive self-similar Markov process $(Y_t)_{t\geq 0}$ with index $\alpha$, associated with $\eta$ by Lamperti's transformation, that is
	\begin{align*}
		(Y_t)_{t\geq 0}:=\left(\exp\left(\eta(\tau_t)\right)\right)_{t\geq 0},
	\end{align*}
	where the time-change $\tau_t$ is defined for all $t\geq 0$ by
	\begin{align}\label{Definition time-change tau Lamperti}
		\tau_t:=\inf\left\{s\geq 0:\int_0^s e^{-\alpha\eta(u)}\mathrm{d}u\geq t\right\}.
	\end{align}
	The absorption time of $Y$ is thus given by the following exponential functional
	\begin{align}\label{equation I exponential functional}
		I=\int_0^\infty e^{-\alpha\eta(t)}\mathrm{d}t.
	\end{align}
	Since $\kappa'(\omega_-)<0$ by \eqref{Cramer hypothesis}, we know that $\eta$ drifts to $-\infty$ and $I<\infty$ almost surely. We shall denote $\widehat{\mathbb{P}}_x$ the law of $Y$ starting from $x>0$.
	
	The \textit{spine} $(\sigma(t))_{t\geq 0}$ is the process following the size of the ancestors of $\sigma$ in time. Remark that we can write $\zeta_{\sigma}=\inf\left\{t>0:\sigma(t)=0\right\}$. We thus call $\zeta_\sigma$ the \textit{lifetime} of $\sigma$ to emphasize that we will look at $\sigma$ as a random process rather than a random element of a random metric space. In this direction, we have
	
	\begin{lem}\label{Lemma spine distributed as Y}
		Under $\widehat{\mathcal{P}}_1$, the spine $\sigma$ is distributed as $Y$ under $\widehat{\mathbb{P}}_1$. In particular, it holds that $\zeta_{\sigma}\stackrel{d}{=}I$.
	\end{lem}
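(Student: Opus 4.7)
The plan is to identify the Lamperti Lévy process of $\sigma$ under $\widehat{\mathcal{P}}_1$ and conclude by Lamperti's transformation.

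First, I unpack the size-biasing. Since $\widehat{\mathcal{P}}_1$ is $\mathcal{P}_1$ tilted by $\mathcal{M}$ and $\sigma$ is then tagged by $\mathcal{A}/\mathcal{M}$ in $\partial\mathbb{U}$, the branching property together with the fact that $\mathbb{E}_{\mathcal{P}_x}[\mathcal{M}]=x^{\omega_-}$ gives a generation-by-generation description of $\sigma$: conditionally on $\sigma(n)=u$ and on the cell-system, the next ancestor is $uj$ with probability $\chi_{uj}(0)^{\omega_-}/\sum_k\chi_{uk}(0)^{\omega_-}$, independently across generations. This is the classical size-biased spine construction applied to the intrinsic martingale $\mathcal{M}(n)$, as used in \cite{BBCK16}.

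Next, I decompose the continuous-time trajectory $(\sigma(t))_{t\geq 0}$ into its excursions within successive spine cells. Inside a single spine cell $u$, the process $\chi_u$ evolves as $X$ under $\mathcal{P}_{\chi_u(0)}$; in Lamperti coordinates this is the Lévy process $\xi$. At the branching event that ends the occupancy of cell $u$, the spine switches from $u$ to a child $uj$ selected with bias $\chi_{uj}(0)^{\omega_-}$ among the negative jumps of $\chi_u$. A Palm-type computation, combined with the Esscher transform of $\xi$ at parameter $\omega_-$, shows that the Lamperti Lévy process $\eta$ of $\sigma$ has Laplace exponent
\begin{align*}
\log\mathbb{E}\bigl[e^{q\eta(1)}\bigr]=\bigl[\psi(\omega_-+q)-\psi(\omega_-)\bigr]+\int_{(-\infty,0)}\bigl[(1-e^y)^{\omega_-+q}-(1-e^y)^{\omega_-}\bigr]\Lambda(\mathrm{d}y)=\kappa(\omega_-+q)=\phi(q),
\end{align*}
where the last equalities use the definition \eqref{equation cumulant kappa} of $\kappa$ together with $\kappa(\omega_-)=0$.

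Since $\sigma$ inherits self-similarity of index $\alpha$ from the cell-system and its Lamperti Lévy process has exponent $\phi$, Lamperti's transformation identifies $\sigma$ under $\widehat{\mathcal{P}}_1$ with $Y$ under $\widehat{\mathbb{P}}_1$. The identity $\zeta_\sigma\stackrel{d}{=}I$ then follows from \eqref{equation I exponential functional}, since $\eta$ drifts to $-\infty$ by \eqref{Cramer hypothesis} and $Y$ is absorbed at the finite time given by this exponential functional. The main obstacle is the middle step: rigorously translating the discrete-generation size-biased spine into a continuous-time Lévy process, and checking that the two contributions (tilted within-cell evolution and Palm-biased switching jumps) combine to precisely $\kappa(\omega_-+\cdot)$ without double counting. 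This is exactly where the specific form of $\kappa$ in \eqref{equation cumulant kappa} is essential.
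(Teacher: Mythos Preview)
The paper does not give its own proof of this lemma: it is stated among the preliminaries on the spinal decomposition with the explicit disclaimer that ``the statements are provided without proof, the reader is referred to \cite{BBCK16} for a rigorous treatment.'' So there is nothing in the paper itself to compare against; the result is simply quoted from \cite{BBCK16}, Section~4.

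Your sketch follows exactly the strategy that \cite{BBCK16} carries out: recognise the spine under $\widehat{\mathcal{P}}_1$ as a positive self-similar Markov process of index $\alpha$ and compute the Laplace exponent of its Lamperti L\'evy process from the size-biased spine construction. The algebraic identity you display is correct and recovers $\phi(q)=\kappa(\omega_-+q)$; the two summands correspond respectively to the $e^{\omega_- y}\Lambda(\mathrm{d}y)$ and $e^{\omega_- y}\widetilde{\Lambda}(\mathrm{d}y)$ pieces of the L\'evy measure $\Pi$ in \eqref{equation measure Pi}. One point of phrasing to tighten: you first write that inside a spine cell the process ``evolves as $X$ under $\mathcal{P}_{\chi_u(0)}$'' and only afterwards invoke the Esscher tilt, which reads as if the within-cell law is untilted. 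Under $\widehat{\mathcal{P}}_1$ the spine cell is already biased; it is cleaner to say from the outset that the tilting turns the parent's Lamperti process into the $\omega_-$-Esscher transform of $\xi$, while the switching mechanism superimposes independent jumps with intensity $e^{\omega_- z}\widetilde{\Lambda}(\mathrm{d}z)$. With that adjustment your outline matches the argument in \cite{BBCK16}, and hence what the paper is citing.
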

	
	Lemma \ref{Lemma A and random leaf} relates $A$ to the lifetime of the spine, which in turns is distributed as the variable $I$ by Lemma \ref{Lemma spine distributed as Y}. Let $\mathcal{C}_0^\infty(\mathbb{R}_+^*)$ be the set of infinitely differentiable functions on $\mathbb{R}_+^*$ vanishing together with their derivatives at infinity.  
	Equation \eqref{equation I exponential functional} plays a crucial role to obtain distributional properties of $I$. The next lemma collects some that we shall extensively use throughout the rest of this work.
	
	\begin{lem}\label{properties of k}
		The variable $I$ has a bounded density in $\mathcal{C}_0^\infty(\mathbb{R}_+^*)$, which we denote by $k$. Further, $\lim_{x\to 0+}k(x)=0$ and
	\begin{align*}
			\widehat{\mathbb{E}}_1\left(I^{-1}\right)=\alpha\kappa'(\omega_-)<\infty.
		\end{align*}
	\end{lem}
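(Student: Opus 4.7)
The plan is to treat the three claims in the order (c) the moment identity, (a) existence and smoothness of the density, and (b) the vanishing at $0$, since (a) is technically the heaviest step and (b) follows cleanly from (a) and (c).

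First, I rewrite $I$ as a standard exponential functional: setting $\zeta_t:=-\alpha\eta_t$, the process $\zeta$ is a L\'evy process with Laplace exponent $q\mapsto\phi(-\alpha q)$ in a right-neighbourhood of $0$, and $I=\int_0^\infty e^{\zeta_t}\,\mathrm{d}t$. By \eqref{Cramer hypothesis} and $\alpha<0$, its mean is $\mathbb{E}[\zeta_1]=-\alpha\kappa'(\omega_-)\in(-\infty,0)$, so $\zeta$ drifts to $-\infty$ with finite mean. Bertoin--Yor's formula for the harmonic mean of such an exponential functional then gives immediately
\[
\widehat{\mathbb{E}}_1(I^{-1})=-\mathbb{E}[\zeta_1]=\alpha\kappa'(\omega_-)\in(0,\infty),
\]
settling (c).

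For (a), the existence of a bounded density for $I$ is a standard consequence of the self-similarity of $Y$ combined with the assumption that $X$ is not the negative of a subordinator, so that $\zeta$ is not a pure decreasing drift and $Y$ is genuinely non-degenerate. To upgrade to $C^\infty$-smoothness on $(0,\infty)$, I would exploit the integral equation
\[
x\,k(x)=\int_0^\infty k(y)\,G(x,y)\,\mathrm{d}y
\]
derived by applying the Markov property of $\zeta$ at an independent exponential time (equivalently, from Lamperti's representation of $Y$): the kernel $G$ inherits the required regularity from the L\'evy triplet of $\zeta$, and a bootstrap argument then yields $k\in C^\infty(\mathbb{R}_+^*)$. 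Decay of $k$ and all its derivatives at $+\infty$ follows from the existence of sufficiently many positive moments of $I$, themselves controlled by the Carmona--Petit--Yor recursion for $\mathbb{E}[I^s]$ in terms of $\phi$.

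Finally, (b) follows from (a) and (c): the integral equation just mentioned ensures that $k(0^+)$ exists as a (finite or infinite) limit, and this limit must equal $0$, for otherwise
\[
\int_0^\varepsilon x^{-1}k(x)\,\mathrm{d}x\geq\tfrac12\,k(0^+)\int_0^\varepsilon\frac{\mathrm{d}x}{x}=+\infty
\]
would contradict the finiteness of $\widehat{\mathbb{E}}_1(I^{-1})$ established in (c). The main obstacle is (a): a rigorous proof of the $C^\infty$-smoothness in the present generality genuinely requires the integral-equation (or Mellin-transform) machinery for exponential functionals of L\'evy processes developed by Carmona, Petit and Yor and extended by several authors; both the moment identity (c) and the limit (b) are elementary consequences once that regularity is granted.
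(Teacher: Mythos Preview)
The paper's own proof is purely by citation: existence of a density from \cite{BLM08}, $C_0^\infty$-regularity and the value of $k(0^+)$ from Patie--Savov \cite{PS16} (after checking that their hypotheses are met via the explicit form \eqref{equation measure Pi} of $\Pi$), and the moment identity from Carmona--Petit--Yor \cite{CPY97}. Your proposal instead sketches direct arguments, so the routes are genuinely different.

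Your treatment of (c) is correct and is exactly what \cite{CPY97} Proposition~3.1(iv) records. Your argument for (b) is a pleasant alternative to invoking \cite{PS16} Theorem~2.15: once $\widehat{\mathbb{E}}_1(I^{-1})<\infty$ is known, a strictly positive value of $k(0^+)$ is indeed incompatible with integrability of $x^{-1}k(x)$ near $0$. The one soft spot is the sentence ``the integral equation just mentioned ensures that $k(0^+)$ exists as a (finite or infinite) limit'': as written, your contradiction only forces $\liminf_{x\to 0^+}k(x)=0$, and you still need the limit itself to exist. This does follow from the Carmona--Petit--Yor integral equation (or from the Mellin asymptotics), but you should either spell that step out or cite it explicitly; otherwise the implication $\liminf=0\Rightarrow\lim=0$ is unjustified.

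For (a) you honestly flag that the $C^\infty$-smoothness is the real work and point to the integral-equation/Mellin machinery; that is fine as an outline, but be aware that in the paper this is simply delegated to \cite{PS16} Theorem~2.4(3), and the only thing one actually has to \emph{check} is that $\eta$ is not of pure compound-Poisson-plus-negative-drift type (equivalently that $\Lambda(\mathbb{R}_-)=\infty$ or $\sigma^2>0$ or the drift is non-negative). Your sketch does not isolate this trichotomy, which is the concrete hypothesis-check the paper performs.
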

	These results are already known. Hence in the following proof, we only provide references and check that the hypotheses of the cited theorems are fulfilled.
	\begin{proof}
	Theorem 3.9 in \cite{BLM08} ensures the existence of $k$.
	Recently, Patie and Savov \cite{PS16} have shown that $k$ is infinitely differentiable. 
	The L\'evy measure $\Pi$ of $\eta$ is given (see \cite{BBCK16} Section 4.3) by 
	\begin{align}\label{equation measure Pi}
		\Pi(\mathrm{d}y):=e^{\omega_-y}(\Lambda+
		\widetilde{\Lambda})(\mathrm{d}y).
	\end{align}
	where $\widetilde{\Lambda}$ is the push-forward of $\Lambda$ by $y\mapsto\mathrm{1}_{\left\{y<0\right\}}\log(1-e^y)$.
	We see that if $\Lambda(\mathbb{R}_-)=\infty$, then $\Pi$ also has infinite total mass. Notice that we have either $\Lambda(\mathbb{R}_-)=\infty$, or $\sigma^2>0$, or that $\eta$ is a compound Poisson process with a non-negative drift. Theorem 2.4.(3)\footnote{In \cite{PS16}, the authors use the equivalent convention that the process drifts to $+\infty$ and they take the negative of the exponential to define $I$.} in \cite{PS16} thus shows that $k\in\mathcal{C}_0^\infty(\mathbb{R}_+^*)$ (see in particular Remark 2.5 in the same paper). Finally, the limit at 0 of $k$ is given in \cite{PS16} by Theorem 2.15. 
	The statement on the moment of order $-1$ can be found in \cite{CPY97} Proposition 3.1(iv).
	\end{proof}
	
	We conclude this section by recalling the following essential fact on the spinal decomposition: conditionally on $(\sigma(t))_{t\geq 0}$, a child depends only on the spine through its own initial value, given by the size $x$ of the negative jump who generated it, and then evolves with law $\mathbb{P}_x$, independently from $(\sigma(t))_{t\geq 0}$ and the other children.

	\underline{\textit{Notation}}:
	In the sequel, the expectations under $\mathbb{P}_x,\widehat{\mathbb{P}}_x,\mathcal{P}_x,\widehat{\mathcal{P}}_x$ are denoted respectively by $\mathbb{E}_x,\widehat{\mathbb{E}}_x,\mathcal{E}_x,\widehat{\mathcal{E}}_x$.

	\section{Existence of the profile}\label{Section Regularity of the intrinsic area process}
	
	\subsection{Main result}
	
	The following theorem answers the question of the regularity of $t\mapsto A(t)$ in terms of $\alpha$.
	
	\begin{thm}\label{absolute continuity}
	$\mathrm{d}A$ is almost surely singular with respect to the Lebesgue measure if and only if $\alpha\leq -\omega_-$, whereas $\mathrm{d}A(x)$ is absolutely continuous almost surely whenever $\alpha>-\omega_-$.
	\end{thm}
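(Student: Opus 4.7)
The strategy is to construct an explicit candidate density for $\mathrm{d}A$ by iterating the branching and self-similarity identity \eqref{equation branching + self-similarity of A} at generation $n$, and then to read off the critical index from a competition between the height and width of the resulting ``density bumps''. Combining Lemmas \ref{Lemma A and random leaf}, \ref{Lemma spine distributed as Y} and \ref{properties of k}, one first obtains the baseline identity $\mathcal{E}_1[A(t)] = \int_0^t k(s)\,ds$. Writing the iterated branching decomposition
$$A(s) = \sum_{|u|=n} \chi_u(0)^{\omega_-}\,A_u\bigl((s-b_u)^+\chi_u(0)^\alpha\bigr),$$
with the $A_u$ i.i.d.\ copies of $A$, independent of $\mathcal{G}_n := \sigma(\chi_v : |v|\leq n)$, and taking conditional expectation gives $\mathcal{E}_1[A(s)\mid \mathcal{G}_n] = \int_0^s g_n(r)\,dr$ where
$$g_n(r) := \sum_{|u|=n,\,b_u\leq r} \chi_u(0)^{\omega_- + \alpha}\,k\bigl((r-b_u)\chi_u(0)^\alpha\bigr).$$
A change of variables shows $\int_0^\infty g_n(r)\,dr = \mathcal{M}(n) \to \mathcal{M}$ a.s., while the tower property makes $(g_n(r))_{n\geq 0}$ a nonnegative martingale in $n$, convergent a.s.\ to some $g(r)$; Fatou and the $L^1$-convergence $\mathcal{E}_1[A(s)\mid \mathcal{G}_n] \to A(s)$ yield $\int_0^s g(r)\,dr \leq A(s)$, with equality exactly when no mass is lost in the limit.

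\smallskip
\noindent In the absolutely continuous case $\alpha > -\omega_-$, the exponent $\omega_-+\alpha$ on $\chi_u(0)$ is strictly positive, so small cells produce flat (small-height) density bumps and the mass cannot concentrate on thin sets. The plan here is to upgrade the a.s.\ convergence $g_n \to g$ to $L^1$-convergence via an $L^p$ bound ($p>1$) on $g_n$, proved through a second-moment computation on the branching random walk --- using Lemma \ref{properties of k} (finiteness of $k$, and of $\widehat{\mathbb{E}}_1[I^{-1}]=\alpha\kappa'(\omega_-)$) to bound the pointwise heights of the bumps, and the stability window \eqref{Cramer hypothesis} to control the branching-random-walk moments $\mathcal{E}_1[\sum_{|u|=n}\chi_u(0)^q]$ at an exponent $q$ slightly above $\omega_-$. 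Scheff\'e's lemma then forces $\int g\,dr = \mathcal{M}$, hence $\mathrm{d}A = g(r)\,dr$ almost surely.

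\smallskip
\noindent In the singular case $\alpha\leq -\omega_-$ the same bumps have heights $\chi_u(0)^{\omega_-+\alpha}\geq 1$ but widths of order $\chi_u(0)^{-\alpha}\to 0$ along the branching random walk, so that at any fixed $r$ the expected number of bumps active at generation $n$ vanishes; this is the precise spot where the sign of $\omega_-+\alpha$ enters. A first-moment / Borel--Cantelli argument should then give $g(r) = 0$ for Lebesgue-a.e.\ $r$ almost surely; combined with the preserved total mass $\int g_n\,dr \to \mathcal{M} > 0$, the weak limit $\mathrm{d}A$ must charge a Lebesgue-null set, proving singularity. The main obstacle on both sides is the same delicate one: quantifying the competition between the bump heights $\chi_u(0)^{\omega_-+\alpha}$ and their combinatorial multiplicity at generation $n$ sharply enough to see the threshold $-\omega_-$ (rather than the $-1$ of pure fragmentations), a task in which the sign of $\omega_-+\alpha$ must be exploited decisively rather than absorbed into a generic estimate.
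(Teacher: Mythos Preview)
Your plan takes a genuinely different route from the paper's. The paper does not build a candidate density; for $\alpha>-\omega_-$ it uses a Fourier criterion (Lemma~\ref{Lemma absolute continuity for random metric space}): under a size-biased law it tags two independent leaves $\sigma,\sigma'$, computes the density of $\nabla=\zeta_\sigma-\zeta_{\sigma'}$ via the spinal decomposition, and bounds it near $0$, which by Plancherel forces $\mathcal{F}_{\mathrm{d}A}\in\mathbb{L}^2$. For $\alpha\le-\omega_-$ the paper argues directly on the time-$t$ branching identity: choosing $\epsilon_n=X_n(t)^{-\alpha}$ gives
\[
\epsilon_n^{-1}\bigl(A(t+\epsilon_n)-A(t)\bigr)\ \ge\ X_n(t)^{\omega_-+\alpha}A_n(1),
\]
so if infinitely many fragments exist at $t$ the right side is unbounded and $A'(t)$ cannot exist; otherwise a finite sum and Lemma~\ref{Lemma A(epsilon)} give $A'(t)=0$. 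Either way $A'=0$ a.e., hence singularity.

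Your martingale setup is sound---$\mathcal{E}_1[A(s)\mid\mathcal{G}_n]=\int_0^s g_n$, with $(g_n(r))_n$ a nonnegative martingale for a.e.\ $r$---but both halves of the plan have real gaps.

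\textbf{Absolutely continuous case.} The entire argument rests on an $L^p$ bound ($p>1$) for $g_n(r)$ uniform in $n$, which you do not prove. This is not a formality: a second moment of $g_n(r)$ unavoidably involves pairs $(u,v)$ at generation $n$ through the joint law of $(\chi_u(0),b_u,\chi_v(0),b_v)$, so ``branching-random-walk moments $\mathcal{E}_1[\sum_{|u|=n}\chi_u(0)^q]$'' alone do not suffice. Moreover, when $\omega_+/\omega_-\le 2$ one has $\mathcal{E}_1[\mathcal{M}^2]=\infty$ (cf.\ \cite{BBCK16} Lemma~2.3), and since $\int g_n\,\mathrm{d}r=\mathcal{M}(n)$ there is no hope of a uniform $L^2$ bound on $g_n$ in that regime; the paper handles this by a truncation $\mathcal{A}_K$. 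Your plan does not indicate how to get $p>1$ in this case.

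\textbf{Singular case.} Two problems. First, $\mathcal{E}_1[g_n(r)]=k(r)$ for every $n$, so a ``first-moment / Borel--Cantelli'' argument cannot force $g_n(r)\to 0$; the expected number of bumps with $b_u<r$ in fact diverges, not vanishes. Second, even if you established $g(r)=0$ a.e., the implication ``$g_n\,\mathrm{d}r\Rightarrow\mathrm{d}A$ weakly and $g_n\to 0$ a.e., hence $\mathrm{d}A$ singular'' is false in general (take $g_n=n\sum_{j<n}\mathds{1}_{[j/n,\,j/n+n^{-2}]}$, which converges weakly to Lebesgue on $[0,1]$). You would need the additional fact that the density of the absolutely continuous part of $\mathrm{d}A$ equals $\lim_n g_n$; this can be extracted from the martingale structure, but it is a separate argument you have not supplied. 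By contrast, the paper's three-line computation of $A'(t)$ via the time-$t$ identity bypasses all of this.
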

	
	We recover Theorem 4 of Haas \cite{H04}, since in the pure fragmentations case $\omega_-=1$ (her result does not require dislocations to be binary though). Recall that this theorem can be read as a statement on the $A$ stemming from either $\partial\mathbb{U}$ or $\mathcal{T}$, as explained in Section \ref{Section preliminaries}.
	
	\subsection{Toolbox}\label{Subsection toolbox}
	We introduce in this subsection the main tools of the proof of Theorem \ref{absolute continuity}.
	Let $\mu$ be a measure on $\mathbb{R}$. We denote its Fourier-Stieljes transform
	\begin{align*}
		\mathcal{F}_\mu(\theta):=\int_{\mathbb{R}}e^{i\theta x}\mu(\mathrm{d}x),\qquad \theta\in\mathbb{R}.
	\end{align*}
	Recall from Plancherel's Theorem that
	\begin{equation}\label{condition density}		
	\mu(\mathrm{d}x)\ll\mathrm{d}x\quad\text{with}\quad\mu(\mathrm{d}x)/\mathrm{d}x\in\mathbb{L}^2(\mathrm{d}x)\qquad\Leftrightarrow\qquad\mathcal{F}_{\mu}\in\mathbb{L}^2(\mathrm{d}x).
	\end{equation}
	We shall use \eqref{condition density} to prove the next lemma, which is the main ingredient in the proof of the absolute continuity of $\mathrm{d}A$.
	It was used in \cite{H04} but somewhat implicitely. We state it in a general setting. 
	
	Let $\mathbf{P}$ be a probability measure on a generic random space. Let $(E,d,\mu,\rho)$ be a random measured metric space, where $\mu$ is a measure on $E$ with finite total mass $\mathbf{P}$-almost surely and $\rho\in E$ is a distinguished element.
	Let $B(\rho,r)$ be the open ball centered in $\rho$ with radius $r>0$.
	Let $\gamma,\gamma'$ be two random variables in $E$ such that $\gamma$ and $\gamma'$ are conditionally independent given $(E,d,\mu,\rho)$, with conditional law $\mu(\cdot)/\mu(E)$.
	
	\begin{lem}\label{Lemma absolute continuity for random metric space}
		If the law of $\nabla:=d(\rho,\gamma)-d(\rho,\gamma')$ has a density $h$ which is bounded in a neighbourhood of 0, then $m:r\mapsto \mu(B(\rho,r))$ is absolutely continuous with respect to the Lebesgue measure, with a density in $\mathbb{L}^2(\mathrm{d}x)$ $\mathbf{P}$-almost surely.  
	\end{lem}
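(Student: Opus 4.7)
The plan is to verify the Plancherel criterion \eqref{condition density} for $m$ by working with the normalized measure $m/\mu(E)$ (well-defined on $\{\mu(E)>0\}$, and the case $\mu(E)=0$ is trivial) and showing that its Fourier transform lies in $\mathbb{L}^2(\mathrm{d}\theta)$ $\mathbf{P}$-almost surely. Multiplying by the a.s.\ finite random constant $\mu(E)$ will then transfer the $\mathbb{L}^2$-density to $m$ itself. The $\mathbb{L}^2$-membership will be obtained by controlling the \emph{averaged} $\mathbb{L}^2$-norm and invoking Tonelli to pass to the a.s.\ statement.

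First I would expand $|\mathcal{F}_{m/\mu(E)}(\theta)|^2$ as a double integral over $E\times E$. Since $m$ is the push-forward of $\mu$ by $x\mapsto d(\rho,x)$ and $\gamma,\gamma'$ are conditionally i.i.d.\ of law $\mu(\cdot)/\mu(E)$, this double integral is exactly the conditional characteristic function of $\nabla$:
\[
|\mathcal{F}_{m/\mu(E)}(\theta)|^2 = \mathbf{E}\bigl[e^{i\theta\nabla}\mid (E,d,\mu,\rho)\bigr].
\]
Taking expectations and applying Tonelli (legitimate, since $|\mathcal{F}|^2\geq 0$) gives
\[
\mathbf{E}\Bigl[\int_{\mathbb{R}} |\mathcal{F}_{m/\mu(E)}(\theta)|^2\,\mathrm{d}\theta\Bigr] = \int_{\mathbb{R}} \mathcal{F}_h(\theta)\,\mathrm{d}\theta,
\]
so the entire proof is reduced to showing that the right-hand side is finite.

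The key structural observation is that $\mathcal{F}_h$ is automatically non-negative, being an average of squared moduli of characteristic functions. I would then exploit this positivity via a Fej\'er-type Fatou argument: multiplying by $(1-|\theta|/T)_+$ and swapping integrals rewrites the bound as
\[
\int_{\mathbb{R}} h(x)\,\frac{4\sin^2(Tx/2)}{T x^2}\,\mathrm{d}x,
\]
where the kernel is an approximate identity of total mass $2\pi$. Splitting the integral at $|x|\leq \delta$, where $h$ is bounded by some constant $M$, gives the uniform-in-$T$ estimate $2\pi M + 4\|h\|_{\mathbb{L}^1}/(T\delta^2)$; letting $T\to\infty$ and invoking Fatou (permitted by $\mathcal{F}_h\geq 0$) then yields $\int_{\mathbb{R}} \mathcal{F}_h\leq 2\pi M<\infty$, closing the loop.

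The main obstacle is precisely this last step: converting the merely \emph{local} boundedness of $h$ at $0$ into global integrability of $\mathcal{F}_h$. No useful pointwise decay of $\mathcal{F}_h$ is available from the hypothesis, so some smoothing on the Fourier side is unavoidable; the non-negativity $\mathcal{F}_h\geq 0$, itself a consequence of $\nabla$ being a difference of conditionally i.i.d.\ variables, is what legitimates the Fatou step and makes the whole plan work.
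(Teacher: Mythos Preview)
Your proof is correct and follows essentially the same route as the paper: both identify $|\mathcal{F}_{m/\mu(E)}(\theta)|^2$ with the conditional characteristic function of $\nabla$, take expectations to obtain the non-negative function $\mathcal{F}_h$, and then use that non-negativity together with the local boundedness of $h$ at $0$ to conclude $\mathcal{F}_h\in\mathbb{L}^1$. The only difference is that the paper outsources this last implication to Theorem~9 of \cite{BC49}, whereas you supply a self-contained Fej\'er-kernel/Fatou argument proving exactly that result.
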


	\begin{proof}
		Recall \eqref{condition density}, we thus look at
		\begin{align*}
			\frac{|\mathcal{F}_{\mathrm{d}m}(\theta)|^2}{\mu(E)^2}
			&=\frac{\mathcal{F}_{\mathrm{d}m}(\theta)}{\mu(E)}\cdot\frac{\mathcal{F}_{\mathrm{d}m}(-\theta)}{\mu(E)}
			=\int_0^\infty\int_0^\infty\frac{\mathrm{d}m(x)}{\mu(E)}\cdot\frac{\mathrm{d}m(y)}{\mu(E)}e^{i\theta(x-y)}\\
			&=\mathbf{E}\left(e^{i\theta\nabla}|(E,d,\mu,\rho)\right),
		\end{align*}
		where $\mathbf{E}$ is the expectation operator induced by $\mathbf{P}$.
		We see in particular that $\theta\mapsto\mathbf{E}\left(e^{i\theta\nabla}\right)\geq 0$. Theorem 9 in \cite{BC49} ensures that if $\nabla$ has a density bounded in a neighbourhood of $0$, then its Fourier transform is integrable, that is
		\begin{align*}
			\int_{\mathbb{R}}\mathbf{E}\left(e^{i\theta\nabla}\right)\mathrm{d}\theta=\mathbf{E}\left(\int_{\mathbb{R}}\frac{|\mathcal{F}_{\mathrm{d}m}(\theta)|^2}{\mu(E)^2}\right)<\infty.
		\end{align*}
		We conclude by Plancherel's Theorem \eqref{condition density}.
	\end{proof}
	
	We shall use this Lemma for two suitable choices of $\mathbf{P}$, taking $(E,d,\mu,\rho)$ as $(\mathcal{T},d,\mathcal{A},0)$, the distance $d$ being the age, see Section \ref{Section preliminaries}. This means in particular that $m=A$.

	We state an easy but important consequence of Lemma \ref{Lemma A and random leaf} in the next lemma. Recall that $\zeta$ denotes the lifetime of $\mathbf{X}$.
	
	\begin{lem}\label{Lemma A(epsilon)}
		The function $t\mapsto A(t)$ is strictly increasing on $\intervalleoo{0}{\zeta}$ and it holds that
		\begin{align*}
			\mathcal{E}_1\left(A(\epsilon)\right)=o(\epsilon),\quad\epsilon\to 0.
		\end{align*}
	\end{lem}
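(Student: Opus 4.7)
The two assertions are unrelated in character, so I would treat them separately.

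For the asymptotic bound $\mathcal{E}_1[A(\epsilon)]=o(\epsilon)$, the plan is to reroute the computation through the spine. Lemma~\ref{Lemma A and random leaf} gives $A(\epsilon)/\mathcal{M}=\widehat{\mathcal{P}}_1(\zeta_\sigma\leq\epsilon\mid(\chi_u)_{u\in\mathbb{U}})$ under $\widehat{\mathcal{P}}_1$, and the change of measure $\mathrm{d}\widehat{\mathcal{P}}_1/\mathrm{d}\mathcal{P}_1=\mathcal{M}$ together with the fact that $A(\epsilon)=0$ on $\{\mathcal{M}=0\}$ yields
\[
\mathcal{E}_1[A(\epsilon)]=\widehat{\mathcal{E}}_1[A(\epsilon)/\mathcal{M}]=\widehat{\mathcal{E}}_1\!\left[\widehat{\mathcal{P}}_1(\zeta_\sigma\leq\epsilon\mid(\chi_u))\right]=\widehat{\mathcal{P}}_1(\zeta_\sigma\leq\epsilon).
\]
Lemma~\ref{Lemma spine distributed as Y} identifies the right-hand side with $\widehat{\mathbb{P}}_1(I\leq\epsilon)=\int_0^\epsilon k(u)\,\mathrm{d}u$ (Lemma~\ref{properties of k}). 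Since $k$ is continuous on $\mathbb{R}_+$ with $k(0+)=0$, the estimate $\int_0^\epsilon k\leq\epsilon\sup_{[0,\epsilon]}k$ gives $\mathcal{E}_1[A(\epsilon)]=o(\epsilon)$.

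For strict monotonicity, I would fix rationals $0<s<t$ and aim to prove that $\mathcal{P}_1\{A(s)=A(t),\,t<\zeta\}=0$, the full statement then following by countable union. The main tool is the branching-scaling identity \eqref{equation branching + self-similarity of A}, which conditionally on $(\mathbf{X}(u);u\leq s)$ writes
\[
A(t)-A(s)=\sum_{i:\,X_i(s)>0} X_i(s)^{\omega_-}\,A_i\!\left((t-s)X_i(s)^\alpha\right),
\]
with the $A_i$'s i.i.d.\ copies of $A$ under $\mathcal{P}_1$ independent of the past. On $\{t<\zeta\}$ the sum has infinitely many non-zero terms, and the sizes $X_i(s)$ accumulate at $0$ (by the first bullet of the Preliminaries), so $r_i:=(t-s)X_i(s)^\alpha\to\infty$. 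The event $\{A(t)=A(s)\}$ then factorises into the infinite product $\prod_i q(r_i)$ with $q(r):=\mathcal{P}_1(A(r)=0)$. Since $A(r)\uparrow\mathcal{M}$ and $\mathcal{P}_1(\mathcal{M}>0)=:p>0$ (the intrinsic martingale has mean $1$), one has $q(r)\to 1-p<1$ as $r\to\infty$, so that infinitely many factors are bounded above by some $q<1$ and the product vanishes.

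The main obstacle will be to ensure that indeed infinitely many $X_i(s)>0$ accumulate at $0$ for every $s\in(0,\zeta)$, rather than only finitely many; this has to be derived from the Cram\'er hypothesis \eqref{Cramer hypothesis} and the requirement $\Lambda((-\infty,0))>0$, possibly via a short dust-creation argument exploiting that $\kappa(\omega_-)=0$ forces $\Lambda$ to produce enough small fragments within any positive time-window. Once this accumulation is in hand, the infinite-product vanishing argument closes the proof.
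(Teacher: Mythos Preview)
Your treatment of $\mathcal{E}_1[A(\epsilon)]=o(\epsilon)$ is correct and coincides with the paper's: pass to $\widehat{\mathcal{P}}_1$ via the density $\mathcal{M}$, rewrite $A(\epsilon)/\mathcal{M}$ as the conditional probability $\widehat{\mathcal{P}}_1(\zeta_\sigma\leq\epsilon\mid(\chi_u))$ by Lemma~\ref{Lemma A and random leaf}, then use Lemma~\ref{Lemma spine distributed as Y} together with $k(0+)=0$ from Lemma~\ref{properties of k}.

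For strict monotonicity the paper gives no self-contained argument; it simply defers to Haas \cite{H04}, Proposition~10(iv). Your proposal is more explicit, but the gap you flag is genuine and more serious than your closing paragraph suggests. The infinite-product step requires infinitely many positive fragments at the fixed time $s$, and you hope to extract this from \eqref{Cramer hypothesis} via a ``dust-creation'' argument. However, Corollary~\ref{Corollary number of fragments} (proved later in the paper, and itself relying on Lemma~\ref{Lemma A(epsilon)}) shows that when $\alpha\leq-\omega_-$ the number of positive fragments is \emph{finite} for Lebesgue-almost every time. Since Lemma~\ref{Lemma A(epsilon)} is invoked precisely in that regime (notably in Section~\ref{Section Hausdorff dimension} to identify the support of $\mathrm{d}A$), the accumulation of fragments you need is at best a delicate fixed-time statement that cannot be obtained by the kind of robust argument you sketch, and is in any case unavailable at this stage of the paper without circularity.

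A route that avoids counting fragments altogether, and is closer to what Haas does, is to establish directly that $\mathcal{P}_1(A(r)=0)=0$ for every fixed $r>0$. Once this is known, a \emph{single} surviving fragment $X_1(s)>0$ at time $s$ already forces $A(t)-A(s)\geq X_1(s)^{\omega_-}A_1\bigl((t-s)X_1(s)^{\alpha}\bigr)>0$ almost surely through \eqref{equation branching + self-similarity of A}, and your countable-union step over rational pairs $(s,t)$ then closes the argument without any appeal to the cardinality of $\mathbf{X}(s)$.
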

	
	\begin{proof}
		Fix $\epsilon>0$. We write
		\begin{align*}
			\mathcal{E}_1(A(\epsilon))
			&=\widehat{\mathcal{E}}_1\left(\frac{A(\epsilon)}{\mathcal{M}}\right)=\widehat{\mathcal{P}}_1\left(\zeta_{\widehat{\chi}}\leq \epsilon\right),
		\end{align*}
		where the last identity is seen from Lemma \ref{Lemma A and random leaf}.
		Lemma \ref{Lemma spine distributed as Y} combined with the fact that $k(x)\to 0$ as $x\to 0+$ from Lemma \ref{properties of k} entail that
		\begin{align*}
			\mathcal{E}_1\left(A(\epsilon)\right)
			=o(\epsilon),\quad\epsilon\to 0.
		\end{align*}
		Very similar arguments as in \cite{H04} Proposition 10$(iv)$ can be applied to show that $A$ is strictly increasing on $\intervalleoo{0}{\zeta}$.
	\end{proof}

	\subsection{Proof of Theorem \ref{absolute continuity}}\label{Subsection proof}
	 
	In the case of self-similar pure fragmentations, Haas \cite{H04} exploited the unit interval representation to tag two fragments by sampling two independent uniform random variables on $\intervalleff{0}{1}$. In the context of Lemma \ref{Lemma absolute continuity for random metric space}, it means that the measure $\mu$ is uniform over the leaves, given the tree.
	Recall that we work on $\overline{\mathbb{U}}$. In our case, $\mathcal{A}$ is not uniform on $\partial\mathbb{U}$. However it is not required to apply Lemma \ref{Lemma absolute continuity for random metric space}.
	
	We divide the proof into two subsections. Even though the second one would be enough to prove Theorem \ref{absolute continuity} in great generality, it is very similar to the first one but involves considerations that can be avoided in some cases, and we do so for the sake of clarity.
	
	\subsubsection{The case $\omega_+/\omega-_->2$ and $\alpha>-\omega_-$.}
	
	We assume throughout this subsection that $\omega_+/\omega_->2$ and $\alpha>-\omega_-$. The reason is that thanks to Lemma 2.3 in \cite{BBCK16}, $\mathcal{E}_1(\mathcal{M}^2)<\infty$. We can thus define a probability measure $\check{\mathcal{P}}_x$ absolutely continuous with respect to $\widehat{\mathcal{P}}_x$ with density $\mathcal{M}/\widehat{\mathcal{E}}_x(\mathcal{M})=x^{\omega_-}\mathcal{M}/\mathcal{E}_x(\mathcal{M}^2)$, which also means that $\check{\mathcal{P}}_x$ has density $\mathcal{M}^2/\mathcal{E}_x(\mathcal{M}^2)$ with respect to $\mathcal{P}_x$. In particular, we can choose $\mathbf{P}=\check{\mathcal{P}}_1$ and try to apply Lemma \ref{Lemma absolute continuity for random metric space}. (This argument does not apply when $\omega_+/\omega_-\leq 2$ since we then know, still from \cite{BBCK16} Lemma 2.3, that $\mathcal{E}_1(\mathcal{M}^2)=\infty$.)
	
	In this subsection, we write $C=1/\mathcal{E}_1(\mathcal{M})$.
	
	\begin{lem}\label{Lemma density nabla integrable case}
		Consider $\nabla$ as in Lemma \ref{Lemma absolute continuity for random metric space}. Under $\check{\mathcal{P}}_1$, $\nabla$ has a density $h:\mathbb{R}\to\mathbb{R}_+\cup\left\{\infty\right\}$, given by
	\begin{align*}
		h(x)=C\widehat{\mathbb{E}}\left(\sum_{s>0}|\Delta_-Y(s)|^{\omega_-+\alpha}Y(s)^{\alpha}\int_0^\infty\mathrm{d}uk(u|\Delta_-Y(s)|^{\alpha})k((u-x)Y(s)^{\alpha})\right),
	\end{align*}
	where $k$ denotes the density of the law of $I$ from Lemma \ref{properties of k}.
	\end{lem}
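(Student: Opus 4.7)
The plan is to compute $\check{\mathcal{E}}_1[f(\nabla)]$ for a generic bounded measurable $f$ and read off $h$ by identification. Under $\check{\mathcal{P}}_1$ the leaves $\gamma,\gamma'$ are conditionally i.i.d.\ with law $\mathrm{d}\mathcal{A}/\mathcal{M}$ given the cell-system, and $\check{\mathcal{P}}_1$ has density $\mathcal{M}^2/\mathcal{E}_1(\mathcal{M}^2)$ with respect to $\mathcal{P}_1$, so
\[
\check{\mathcal{E}}_1[f(\nabla)] \;=\; \frac{1}{\mathcal{E}_1(\mathcal{M}^2)}\,\mathcal{E}_1\!\left[\iint f(\zeta_\ell-\zeta_{\ell'})\,\mathrm{d}\mathcal{A}(\ell)\,\mathrm{d}\mathcal{A}(\ell')\right].
\]
The many-to-one identity $\mathcal{E}_1[\int F(X,\ell)\,\mathrm{d}\mathcal{A}(\ell)] = \mathcal{E}_1(\mathcal{M})\,\widehat{\mathcal{E}}_1[F(X,\sigma)]$ applied to the outer $\mathrm{d}\mathcal{A}(\ell')$-integral reduces the task to computing $\widehat{\mathcal{E}}_1[\int f(\zeta_\ell-\zeta_\sigma)\,\mathrm{d}\mathcal{A}(\ell)]$, up to the overall constant absorbed in $C$.

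Next I would invoke the spinal decomposition recalled at the end of Section~\ref{Section preliminaries}. Under $\widehat{\mathcal{P}}_1$ the spine of $\sigma$ has the law of $Y$ under $\widehat{\mathbb{P}}_1$ (Lemma~\ref{Lemma spine distributed as Y}) and $\zeta_\sigma=I$. Conditionally on this spine, $\mathcal{A}$ decomposes as $\sum_{s>0}\mathcal{A}^{(s)}$, with $\mathcal{A}^{(s)}$ the intrinsic area of the sibling subtree rooted at the negative-jump time $s$ of $Y$; these sibling subtrees are conditionally independent, the $s$-th being distributed as a cell-system under $\mathcal{P}_{|\Delta_-Y(s)|}$. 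For $\ell$ in the $s$-th subtree, $\zeta_\ell = s+T_\ell^{(s)}$ with $T_\ell^{(s)}$ the lifetime measured from the subtree's root; applying the many-to-one identity a second time inside this subtree together with the scaling $\mathcal{E}_y(\mathcal{M})=y^{\omega_-}$ gives
\[
\mathbb{E}\!\left[\int g(T_\ell^{(s)})\,\mathrm{d}\mathcal{A}^{(s)}(\ell)\,\Big|\,Y\right] \;=\; |\Delta_-Y(s)|^{\omega_-}\,\widehat{\mathbb{E}}_{|\Delta_-Y(s)|}\!\left[g(I^{(s)})\right],
\]
and by Lemma~\ref{properties of k} combined with the self-similarity of $Y$, the random variable $I^{(s)}$ admits the density $u\mapsto|\Delta_-Y(s)|^\alpha k(u|\Delta_-Y(s)|^\alpha)$.

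For the spine itself I would write $I = s+J_s$ with $J_s$ the post-$s$ absorption time of $Y$; the strong Markov property at the jump time $s$ entails that, conditionally on $Y(s)$, $J_s$ is independent of both the pre-$s$ trajectory and of the $s$-th sibling subtree, with density $v\mapsto Y(s)^\alpha k(vY(s)^\alpha)$. Substituting these two densities, summing over $s$ and applying Tonelli,
\[
\widehat{\mathcal{E}}_1\!\left[\int f(\zeta_\ell-\zeta_\sigma)\,\mathrm{d}\mathcal{A}(\ell)\right] \;=\; \widehat{\mathbb{E}}\!\left[\sum_{s>0}|\Delta_-Y(s)|^{\omega_-+\alpha}Y(s)^\alpha\iint f(u-v)\,k(u|\Delta_-Y(s)|^\alpha)\,k(vY(s)^\alpha)\,\mathrm{d}u\,\mathrm{d}v\right];
\]
the change of variables $x=u-v$ (the support of $k$ automatically enforcing $u>0$ and $u-x>0$) produces an integrand of the form $f(x)h(x)$ with $h$ exactly the function in the statement.

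The main obstacle is the careful execution of the spinal decomposition: one must verify that summing over all negative jumps of $Y$ correctly enumerates the sibling subtrees of initial size $|\Delta_-Y(s)|$, both those arising from ordinary off-spine children and those arising from spine transitions, consistently with the decomposition $\Pi = e^{\omega_-y}(\Lambda+\widetilde\Lambda)$ in \eqref{equation measure Pi}. Tonelli legitimises the exchange of sum, integrals and expectation because the integrand is non-negative; crucially, the finiteness of $\mathcal{E}_1(\mathcal{M}^2)$, guaranteed in this subsection by $\omega_+/\omega_->2$, is what allows us to work under $\check{\mathcal{P}}_1$ in the first place.
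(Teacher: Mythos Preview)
Your proposal is correct and follows essentially the same route as the paper: pass from $\check{\mathcal{P}}_1$ to $\widehat{\mathcal{P}}_1$, integrate the second leaf against $\mathcal{A}$, decompose along the spine into sibling subtrees indexed by negative jump times, apply many-to-one inside each subtree to obtain a rescaled copy of $I$, and replace the post-$s$ residual lifetime of the spine by another rescaled copy of $I$. The only technical difference is that where you invoke ``the strong Markov property at the jump time $s$'', the paper uses the optional projection theorem (\cite{DM2}, Theorem~57): since the negative jump times of $Y$ are not stopping times in general, optional projection is the clean way to justify replacing $I-s$ by $Y(s)^{-\alpha}$ times an independent $k$-distributed variable inside the sum over $s$; your formulation is morally the same but would need this tool (or an enumeration argument) to be made fully rigorous.
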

	
	\begin{proof}
	In what follows, we shall implicitely use several times Tonelli's Theorem, since every terms involved in the proof are positive.
	Let $f:\mathbb{R}\to\mathbb{R}_+$ be any non-negative measurable function. We have
	\begin{align*}
		\check{\mathcal{E}}_1\left(f(\nabla)\right)
		&=C\widehat{\mathcal{E}}_1\left(\mathcal{M}f(\nabla)\right)
		=C\widehat{\mathcal{E}}_1\left(\mathcal{M}f(\zeta_\sigma-\zeta_{\sigma'})\right)\\
		&=C\widehat{\mathcal{E}}_1\left(\mathcal{M}\widehat{\mathcal{E}}_1\left(\left.f(\zeta_\sigma-\zeta_{\sigma'})\right|(\chi_u)_{u\in\mathbb{U}},\sigma\right)\right)\\
		&=C\widehat{\mathcal{E}}_1\left(\int_{\partial\mathbb{U}}\mathcal{A}(\mathrm{d}\ell)f(\zeta_\sigma-\zeta_{\ell})\right),
	\end{align*}
	where we used the conditional independence of $\sigma$ and $\sigma'$ given $(\chi_u)_{u\in\mathbb{U}}$. We now choose a subtree among those generated by the children of the spine $(\sigma(s))_{s\geq 0}$ (the ancestors of $\sigma$), according to the intrinsic area. Denoting $\partial\mathbb{U}_s$ the leaves of the tree descending from the negative jump (if any) of the spine at time $s$, it reads as
	\begin{align*}
		\check{\mathcal{E}}_1\left(f(\nabla)\right)
		&=C\widehat{\mathcal{E}}_1\left(\sum_{s>0}\widehat{\mathcal{E}}_1\left(\left.\mathcal{M}_s\int_{\partial\mathbb{U}_s}\frac{\mathcal{A}(\mathrm{d}\ell)}{\mathcal{M}_s}f(\zeta_\sigma-\zeta_{\ell})\right|(\sigma(t))_{t\geq 0}\right)\right)\\
		&=C\widehat{\mathcal{E}}_1\left(\sum_{s>0}\widehat{\mathcal{E}}_{1}\left(\left.f(\zeta_\sigma-s-\zeta_{\widehat{\sigma}})\right|(\sigma(t))_{t\geq 0}\right)\right),
	\end{align*}
	where $\widehat{\sigma}$ is a random leaf of $\partial\mathbb{U}_s$ tagged according to the restiction of the intrinsic area on this subtree. Hence, $\zeta_{\widehat{\sigma}}$ is distributed as the lifetime of a spine under $\widehat{\mathcal{P}}_{|\Delta_-\sigma(s)|^{\omega_-}}$, conditionally on $(\sigma(t))_{t\geq 0}$. More precisely, Theorem 4.7 in \cite{BBCK16} ensures that $\zeta_{\widehat{\sigma}}$ depends on $\sigma$ only through $\Delta_-\sigma(s)$, and is independent of what happens to the spine at other times. The scaling property yields that the above is equal to
	\begin{align}\label{Equation second spine rescaled}
		&C\widehat{\mathcal{E}}_1\left(\sum_{s>0}|\Delta_-\sigma(s)|^{\omega_-}\widehat{\mathcal{E}}_{1}\left(\left.f\left(\zeta_\sigma-s-|\Delta_-\sigma(s)|^{-\alpha}\zeta_{\widehat{\sigma}}\right)\right|(\sigma(t))_{t\geq 0}\right)\right),
	\end{align}
	where now the law of $\zeta_{\widehat{\sigma}}$ is independent of $\sigma$ and is that of $I$ under $\widehat{\mathbb{P}}_1$ by Lemma \ref{Lemma spine distributed as Y}. We thus get
	\begin{align*}
		\check{\mathcal{E}}_1\left(f(\nabla)\right)
		&=C\widehat{\mathcal{E}}_1\left(\sum_{s>0}|\Delta_-\sigma(s)|^{\omega_-}\int_0^\infty\mathrm{d}xk(x)f\left(\zeta_\sigma-s-|\Delta_-\sigma(s)|^{-\alpha}x\right)\right)\\
		&=C\widehat{\mathbb{E}}_1\left(\sum_{s>0}|\Delta_-Y(s)|^{\omega_-}\int_0^\infty\mathrm{d}xk(x)f\left(I-s-|\Delta_-Y(s)|^{-\alpha}x\right)\right),
	\end{align*}
	by Lemma \ref{Lemma spine distributed as Y}.
	Since every term in the sum is positive, we can use the optional projection Theorem (\cite{DM2} Theorem 57) with respect to the natural filtration of $Y$ (see also Theorem 43 in the same book for a definition of optional projection).
	The right-hand side above becomes
	\begin{align*}
		&C\widehat{\mathbb{E}}_1\left(\sum_{s>0}|\Delta_-Y(s)|^{\omega_-}\int_0^\infty\mathrm{d}xk(x)\int_0^\infty\mathrm{d}yk(y)f\left(Y(s)^{-\alpha}y-|\Delta_-Y(s)|^{-\alpha}x\right)\right)\\
		&\hspace{2cm}=C\widehat{\mathbb{E}}_1\Bigg(\sum_{s>0}|\Delta_-Y(s)|^{\omega_-+\alpha}Y(s)^{\alpha}\int_0^\infty\mathrm{d}xk(x|\Delta_-Y(s)|^{\alpha})\\
		&\hspace{7.5cm}\times\int_0^\infty\mathrm{d}yk(yY(s)^{\alpha})f\left(y-x\right)\Bigg),
	\end{align*}
	which gives the claim.
	\end{proof}

	We now provide the proof of the absolute continuity of $\mathrm{d}A$.
	
	\begin{proof}[Proof of Theorem \ref{absolute continuity}, case $\alpha>\omega_-$ and $\omega_+/\omega_->2$.]
		By the lemmas \ref{Lemma absolute continuity for random metric space} and \ref{Lemma density nabla integrable case}, it is sufficient to show that the supremum of $h$ is finite, that is
		\begin{align*}
			\sup_{x\in\mathbb{R}}\widehat{\mathbb{E}}\left(\sum_{s>0}|\Delta_-Y(s)|^{\omega_-+\alpha}Y(s)^{\alpha}\int_0^\infty\mathrm{d}uk(u|\Delta_-Y(s)|^{\alpha})k((u-x)Y(s)^{\alpha})\right)<\infty.
		\end{align*}
		For all $x\in\mathbb{R}$, we can bound $h(x)$ after a suitable change of variable by
	\begin{align}\label{Equation upper bound h abs cont}
		h(x)
		\leq C\widehat{\mathbb{E}}\left(\sum_{s>0}|\Delta_-Y(s)|^{\omega_-}(Y(s)\vee|\Delta_-Y(s)|)^{\alpha}\right)||k||_\infty \int_0^\infty k(u)\mathrm{d}u,
	\end{align}
	the last integral being equal to 1 since $k$ is a density.
	It remains to show that the expectation is finite. Since the terms in the sum do not depend on $\alpha$, we can assume without loss of generality that $Y$ is homogeneous, that is $Y(s)=\exp(\eta(s))$ for all $s\geq 0$. The compensation formula for Poisson point processes then yields that the expectation in the right-hand side above is equal to
	\begin{align*}
		&\int_0^\infty\mathrm{d}s\widehat{\mathbb{E}}_1\left(e^{(\omega_-+\alpha)\eta(s)}\right)\int_{\intervalleoo{-\infty}{0}}\Pi(\mathrm{d}y)(1-e^y)^{\omega_-}(e^y\vee(1-e^y))^{\alpha}\\
		&\hspace{3cm}\leq\int_0^\infty\mathrm{d}se^{\kappa(2\omega_-+\alpha)s}\int_{\intervalleoo{-\infty}{0}}\Lambda(\mathrm{d}y)e^{\omega_-y}(1-e^y)^{\omega_-}2^{-\alpha}.
	\end{align*}
	The last integral is finite by definition of $\omega_-$, as well as the first one when $\kappa(2\omega_-+\alpha)<0$, that is $-\omega_-<\alpha<\omega_+-2\omega_-$. Since $\alpha>-\omega_-$ and $\omega_+/\omega_->2$ by assumption, we always have $-\omega_-<\alpha<0<\omega_+-2\omega_-$, which ends the proof.	
	\end{proof}

	\subsubsection{The case $\omega_+/\omega_-\leq 2$ and $\alpha>-\omega_-$.}
	
	The main issue when $\omega_+/\omega_-\leq 2$ is that the measure $\check{\mathcal{P}}$ defined earlier has now infinite total mass, which prevents us to use Lemma \ref{Lemma absolute continuity for random metric space}. We overcome this by defining a new measure $\check{\mathcal{P}}^{(K)}$ in such a way that the two random leaves are tagged among those whose ancestors' sizes have never been too large, which, we will see, entails that $\check{\mathcal{P}}^{(K)}$ has finite total mass.
	In this direction, let $K>0$ intended to tend to $\infty$. Define $B_K$ as
	\begin{align*}
		B_K:=\left\{\ell\in\partial\mathbb{U}:\ell^*\leq K\right\},
	\end{align*}
	where $\ell^*:=\sup_{t\geq 0}\ell(t)$. Let $\mathcal{A}_K$ be the restriction of the measure $\mathcal{A}$ to $B_K$, and $\mathcal{M}_K:=\mathcal{A}_K(B_K)$.
	We now define for all $x>0$ the probability measure $\widehat{\mathcal{P}}_x^{(K)}$ such that it is absolutely continuous with respect to $\mathcal{P}_x$, with density $\mathcal{M}_K/\mathcal{E}_x(\mathcal{M}^{(K)})$. In the same vein, we would like to define $\check{\mathcal{P}}_x^{(K)}$ to be absolutely continuous with respect to $\widehat{\mathcal{P}}_x^{(K)}$ with density $\mathcal{M}_K/\widehat{\mathcal{E}}_x^{(K)}(\mathcal{M}_K)$. To ease the expressions, we shall write $C$ for the finite and strictly positive constants that appear when changing of measures. Even if $C$ may vary from line to line, it is always known and only depends on the starting point and $K$, which will play no role in the proofs. 
	We need the following
	
	\begin{lem}\label{Lemma 2nd moment truncated intrinsic area}
		For all $x>0$, it holds that $\widehat{\mathcal{E}}_x^{(K)}(\mathcal{M}_K)=\mathcal{E}_x(\mathcal{M}_K^2)<\infty$. In particular, $\left\{\check{\mathcal{P}}_x^{(K)};x>0\right\}$ is a family of probability measures.
	\end{lem}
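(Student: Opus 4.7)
The first identity in the lemma is a routine change-of-measure computation: $\widehat{\mathcal{P}}_x^{(K)}$ is absolutely continuous with respect to $\mathcal{P}_x$ with density proportional to $\mathcal{M}_K$, so $\widehat{\mathcal{E}}_x^{(K)}(\mathcal{M}_K)$ coincides with $\mathcal{E}_x(\mathcal{M}_K^2)$ up to the multiplicative constant absorbed in the convention $C$. The content of the lemma is thus the finiteness $\mathcal{E}_x(\mathcal{M}_K^2) < \infty$, and by the self-similarity identity $\mathcal{M}_K \stackrel{d}{=} x^{\omega_-}\mathcal{M}_{K/x}$ (the latter distributed under $\mathcal{P}_1$) it suffices to treat $x = 1$.

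The plan is to size-bias by $\mathcal{M}$ and invoke the spinal decomposition. Since $\mathrm{d}\widehat{\mathcal{P}}_1/\mathrm{d}\mathcal{P}_1 = \mathcal{M}$, applying Lemma \ref{Lemma A and random leaf} to the restriction of $\mathcal{A}$ to $B_K$ yields
\begin{align*}
\mathcal{E}_1(\mathcal{M}_K^2) = \widehat{\mathcal{E}}_1\bigl(\mathcal{M}_K\,\mathbf{1}_{\sigma^* \leq K}\bigr).
\end{align*}
On $\{\sigma^* \leq K\}$ every spine cell stays below $K$, so every leaf of $B_K$ that diverges from the spine lies entirely in a sibling subtree whose own cells also stay below $K$. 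The spinal decomposition then expresses $\mathcal{M}_K$ as a sum over the siblings $s$; conditionally on the spine, the contribution of sibling $s$ has the law of $\mathcal{M}_K$ under $\mathcal{P}_{|\Delta_-\sigma(s)|}$ and hence mean at most $|\Delta_-\sigma(s)|^{\omega_-}$. Lemma \ref{Lemma spine distributed as Y} allows to replace $\sigma$ by $Y$, giving
\begin{align*}
\mathcal{E}_1(\mathcal{M}_K^2) \leq \widehat{\mathbb{E}}_1\Big(\mathbf{1}_{Y^* \leq K}\sum_{s > 0}|\Delta_- Y(s)|^{\omega_-}\Big).
\end{align*}

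I would then pass to the Lamperti picture $Y(s)=e^{\eta(\tau_s)}$, so that $\{Y^* \leq K\}=\{\eta^* \leq \log K\}$ and the jump sum rewrites as $\sum_u e^{\omega_-\eta(u-)}(1-e^{\Delta\eta(u)})^{\omega_-}\mathbf{1}_{\Delta\eta(u)<0}$. Bounding $\mathbf{1}_{\eta^*\leq \log K} \leq \mathbf{1}_{\eta^*_{u-}\leq \log K}$ to obtain a previsible integrand and applying the compensation formula, the task reduces to showing
\begin{align*}
\widehat{\mathbb{E}}_1\Big(\int_0^\infty e^{\omega_-\eta(u)}\mathbf{1}_{\eta(u)\leq \log K}\,\mathrm{d}u\Big) < \infty,
\end{align*}
the $y$-integral $\int_{(-\infty,0)}(1-e^y)^{\omega_-}\Pi(\mathrm{d}y)$ being finite by \eqref{equation measure Pi} and $\kappa(\omega_-)=0$.

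The main obstacle is exactly this last display: in the regime $\omega_+/\omega_-\leq 2$ one has $\kappa(2\omega_-)\geq 0$, so $\int_0^\infty \widehat{\mathbb{E}}_1(e^{\omega_-\eta(u)})\,\mathrm{d}u$ is already infinite and the indicator $\mathbf{1}_{\eta(u)\leq \log K}$ must be exploited quantitatively. The trick is a Chernoff-type tilt: for any $\lambda\in(2\omega_--\omega_+,\omega_-)$,
\begin{align*}
e^{\omega_-\eta(u)}\mathbf{1}_{\eta(u)\leq \log K} \leq K^{\lambda}\,e^{(\omega_--\lambda)\eta(u)},
\end{align*}
which gives $\widehat{\mathbb{E}}_1\bigl(e^{\omega_-\eta(u)}\mathbf{1}_{\eta(u)\leq \log K}\bigr) \leq K^{\lambda}e^{\kappa(2\omega_--\lambda)u}$. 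Because $\omega_+\in(\omega_-,2\omega_-]$ the chosen interval for $\lambda$ is non-empty, and any such $\lambda$ forces $2\omega_--\lambda\in(\omega_-,\omega_+)$, hence $\kappa(2\omega_--\lambda)<0$. Integrating in $u$ then yields a finite bound and concludes the proof.
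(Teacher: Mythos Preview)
Your proof is correct and follows essentially the same route as the paper's: size-bias to pass to the spine picture, bound each sibling's truncated mass by $|\Delta_-\sigma(s)|^{\omega_-}$, transfer to $Y$ and then to $\eta$, and use a Chernoff tilt to make the $e^{\omega_-\eta}$ integrable. Your parameter $\lambda$ is exactly the paper's $\omega_--p$ with $p\in(0,\omega_+-\omega_-)$, so the final computation and the range condition $\kappa(2\omega_--\lambda)<0$ coincide.
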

	
	\begin{proof}
		By self-similarity, it is enough to show that $\mathcal{E}_1(\mathcal{M}_K^2)<\infty$. Let $\sigma_K$ denote a random leaf sampled on $B_K$ with conditional law $\mathcal{A}_K(\cdot)/\mathcal{M}_K$ given $(\chi_u)_{u\in\mathbb{U}}$. Similarly as in the previous subsection, we write 
		\begin{align*}
		\mathcal{E}_1\left(\mathcal{M}_K^2\right)
		&=C\widehat{\mathcal{E}}_1^{(K)}\left(\mathcal{M}_K\right)
		=C\widehat{\mathcal{E}}_1^{(K)}\left(\sum_{s>0}|\Delta_- \sigma_K(s)|^{\omega_-}\widehat{\mathcal{E}}_1^{(K)}\left(\mathcal{M}_{K,s}|(\sigma_K(t))_{t\geq 0}\right)\right)\\
		\shortintertext{where $\mathcal{M}_{K,s}$ is the rescaled truncated mass generated by the cell born at time $s$ (truncated when cells reach a size $K/|\Delta_-\sigma_K(s)|^{\omega_-}$ by the scaling property). Since $\mathcal{M}_{K,s}$ cannot be greater than the non-truncated area, that has expectation 1, we get}
		\mathcal{E}_1\left(\mathcal{M}_K^2\right)
		&\leq C\widehat{\mathcal{E}}_1^{(K)}\left(\sum_{s>0}|\Delta_- \sigma_K(s)|^{\omega_-}\right)=C\widehat{\mathcal{E}}_1\left(\sum_{s>0}\mathds{1}_{\left\{\sigma^*\leq K\right\}}|\Delta_- \sigma_K(s)|^{\omega_-}\right)\\
		&=C\widehat{\mathbb{E}}_1\left(\sum_{s>0}\mathds{1}_{\left\{Y^*\leq K\right\}}|\Delta_- Y(s)|^{\omega_-}\right).
		\end{align*}
		As previously we assume without loss of generality that $Y$ is homogeneous. Now we fix $p\in\intervalleoo{0}{\omega_+-\omega_-}$ and we bound the latter from above by
		\begin{align*}
			CK^{\omega_--p}\widehat{\mathbb{E}}_1\left(\sum_{s>0}e^{p\eta(s-)}(1-e^{\Delta_-\eta(s)})^{\omega_-}\right).
		\end{align*}
		The compensation formula yields that the expectation is equal to
		\begin{align*}
			\int_0^\infty\mathrm{d}se^{(\omega_-+p)\kappa(s)}\int_{\intervalleoo{-\infty}{0}}\Pi(\mathrm{d}y)(1-e^y)^{\omega_-},
		\end{align*}
		which is finite since $\kappa(\omega_-+p)<0$ and by definition of $\omega_-$ and $\Pi$. This shows that $\mathcal{E}_x(\mathcal{M}^2_K)<\infty$ for any $x>0$ by self-similarity.
	\end{proof}
	
	Thanks to Lemma \ref{Lemma 2nd moment truncated intrinsic area}, Lemma \ref{Lemma absolute continuity for random metric space} applies with $\mathbf{P}=\check{\mathcal{P}}_1^{(K)}$ and $\nabla_{K}:=\zeta_{\sigma_K}-\zeta_{\sigma_K'}$, where $\sigma_K$ and $\sigma_K'$ are conditionally independent random leaves in $B_K$ with conditional law $\mathcal{A}_{K}(\cdot)/\mathcal{M}_K$ given $(\chi_u)_{u\in\mathbb{U}}$. We claim that when $K$ is large, $B_K=\partial\mathbb{U}$, that is
	\begin{align*}
		\lim_{K\to\infty}\mathcal{P}_1\left(\mathcal{A}=\mathcal{A}_K\right)=1.
	\end{align*}
	Indeed, as a direct consequence of Corollary 4 in \cite{B17}, we have that $\sup_{u\in\mathbb{U}}\chi_u^*<\infty$ $\mathcal{P}$-a.s., where $\chi_u^*:=\sup_{t\geq 0}\chi_u(t)$. By Lemma \ref{Lemma absolute continuity for random metric space}, it is hence enough to show that $\nabla_K$ has a bounded density in a neighbourhood of 0 to prove the Theorem.
	Similarly to Lemma \ref{Lemma density nabla integrable case}, we have
	
	\begin{lem}\label{Lemma density nabla truncated}
		Under $\check{\mathcal{P}}_1^{(K)}$, $\nabla_K$ has a density $h:\mathbb{R}\to\mathbb{R}_+\cup\left\{\infty\right\}$ given by 
		\begin{align*}
			h:x\mapsto&C\widehat{\mathbb{E}}_1\Bigg(\sum_{s>0}\mathds{1}_{\left\{\sup_{t\leq s}Y(t)\leq K\right\}}|\Delta_-Y(s)|^{\omega_-+\alpha}Y(s)^\alpha\\
		&\hspace{4cm}\times \int_0^\infty \mathrm{d}u g_{2,s}(u|\Delta_-Y(s)|^\alpha)g_{1,s}((u-x)Y(s)^\alpha)\Bigg),
		\end{align*}
		where $C$ is known and comes from the change of measure, and $g_{1,s},g_{2,s}$ are non-negative random functions, measurable with respect to the natural filtration of $Y$ at time $s$ for all $s\geq 0$, that are all pointwise bounded by $k$.
	\end{lem}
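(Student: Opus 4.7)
The proof is a truncated variant of Lemma \ref{Lemma density nabla integrable case}: the densities $k$ that appeared there for the lifetime of each spine get replaced by sub-probability densities coming from the $K$-truncation. Fix a non-negative measurable $f:\mathbb{R}\to\mathbb{R}_+$. The chain of changes of measure $\check{\mathcal{P}}_1^{(K)}\ll\widehat{\mathcal{P}}_1^{(K)}\ll\mathcal{P}_1$ (whose finiteness is precisely the content of Lemma \ref{Lemma 2nd moment truncated intrinsic area}) combined with Lemma \ref{Lemma A and random leaf} applied twice rewrites $\check{\mathcal{E}}_1^{(K)}(f(\nabla_K))$ as
\begin{align*}
    C\,\widehat{\mathcal{E}}_1\left(\mathds{1}_{\{\sigma^*\leq K\}}\int_{B_K}\mathcal{A}(\mathrm{d}\ell)\, f(\zeta_\sigma-\zeta_\ell)\right),
\end{align*}
the extra indicator $\mathds{1}_{\{\sigma^*\leq K\}}$ arising from the restriction of the first tagging to $B_K$. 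Decomposing the integral along the negative jump times of the spine exactly as in Lemma \ref{Lemma density nabla integrable case} and factorizing $\mathds{1}_{\{\sigma^*\leq K\}}=\mathds{1}_{\{\sup_{t\leq s}\sigma(t)\leq K\}}\mathds{1}_{\{\sup_{t>s}\sigma(t)\leq K\}}$ by monotonicity, one reaches the truncated analogue of equation~\eqref{Equation second spine rescaled}.

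For the second leaf tagged in the subtree born at time $s$, Theorem 4.7 of \cite{BBCK16} together with self-similarity ensures that, conditionally on $(\sigma(t))_{t\geq 0}$ and in restriction to the event that the subtree lies in $B_K$, the quantity $\zeta_{\widehat{\sigma}}$ is distributed as $|\Delta_-\sigma(s)|^{-\alpha}$ times the absorption time of an independent copy of $Y$ killed upon exceeding the level $K/|\Delta_-\sigma(s)|$. This sub-probability law admits a density $g_{2,s}$ that is pointwise bounded by $k$, since killing only removes mass. Likewise, by the Markov property at time $s$, in restriction to the event $\{\sup_{t>s}\sigma(t)\leq K\}$ the variable $\zeta_\sigma-s$ is distributed conditionally on $\sigma(s)$ as $\sigma(s)^{-\alpha}$ times the absorption time of an independent copy of $Y$ killed upon exceeding $K/\sigma(s)$, with sub-probability density $g_{1,s}$ also pointwise bounded by $k$. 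Both densities depend only on $K$, $\sigma(s)=Y(s)$ and $\Delta_-\sigma(s)=\Delta_-Y(s)$, and are therefore measurable with respect to the natural filtration of $Y$ at time $s$.

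Applying the optional projection theorem to the non-negative integrand with respect to the natural filtration of $Y$ and performing the same changes of variable $u\mapsto Y(s)^{-\alpha}u$ and $u'\mapsto|\Delta_-Y(s)|^{-\alpha}u'$ as in Lemma \ref{Lemma density nabla integrable case}, one identifies the density $h$ of $\nabla_K$ through $\check{\mathcal{E}}_1^{(K)}(f(\nabla_K))=\int f(x)h(x)\mathrm{d}x$, yielding the announced expression. The main subtlety is the decoupling of the two truncation events along the spine at time $s$: this relies on the strong Markov property of $Y$ combined with Theorem 4.7 of \cite{BBCK16} to guarantee that the future of the spine past time $s$ and the entire subtree born at $s$ evolve independently conditionally on the past, each with its own truncation threshold determined by $K$ and the relevant starting size.
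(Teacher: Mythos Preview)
Your argument is correct and follows essentially the same route as the paper: change of measure to $\widehat{\mathcal{P}}_1$ with the truncation indicator, decomposition along the negative jumps of the spine, identification of the sub-probability densities $g_{1,s},g_{2,s}\leq k$ via self-similarity and the Markov property, and the optional projection plus change of variables to extract $h$. Your explicit factorization $\mathds{1}_{\{\sigma^*\leq K\}}=\mathds{1}_{\{\sup_{t\leq s}\sigma(t)\leq K\}}\mathds{1}_{\{\sup_{t>s}\sigma(t)\leq K\}}$ and the closing remark on decoupling make the mechanism slightly more transparent than in the paper, but the content is the same.
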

	
	The proof being very similar to that of Lemma 6, we do not provide all the steps, but only those where new arguments are needed.
	\begin{proof}
		We have the following analogue of \eqref{Equation second spine rescaled}:
	\begin{align*}
		\check{\mathcal{E}}_1^{(K)}\left(f(\nabla_K)\right)
		&=C\widehat{\mathcal{E}}_1\Bigg(\sum_{s>0}\mathds{1}_{\left\{\sigma^*\leq K\right\}}|\Delta_-\sigma(s)|^{\omega_-}\\
		&\hspace{1cm}\times\widehat{\mathcal{E}}_{1}\Big(\mathds{1}_{\left\{\widehat{\sigma}^*\leq K|\Delta_-\sigma(s)|^{-\omega_-}\right\}}f\left(\zeta_\sigma-s-|\Delta_-\sigma(s)|^{-\alpha}\zeta_{\widehat{\sigma}}\right)\Big|(\sigma(t))_{t\geq 0}\Big)\Bigg)\\
		&=C\widehat{\mathbb{E}}_1\Bigg(\sum_{s>0}\mathds{1}_{\left\{Y_1^*\leq K\right\}}|\Delta_-Y_1(s)|^{\omega_-}\\
		&\hspace{1cm}\times\widehat{\mathbb{E}}_{1}\left(\left.\mathds{1}_{\left\{Y_2^*\leq K|\Delta_-Y_1(s)|^{-\omega_-}\right\}}f\left(I_1-s-|\Delta_-Y_1(s)|^{-\alpha}I_2\right)\right|Y_1\right)\Bigg),\\
	\end{align*}
	by Lemma \ref{Lemma spine distributed as Y}, where $I_1$ and $I_2$ are the respective absorption times at 0 of two independent positive self-similar Markov processes $Y_1,Y_2$ with same distribution $\widehat{\mathbb{P}}_1$.
	The expectations in the right-hand side then becomes
	\begin{align*}
		\widehat{\mathbb{E}}_1\left(\sum_{s>0}\mathds{1}_{\left\{Y_1^*\leq K\right\}}|\Delta_-Y_1(s)|^{\omega_-}\int_0^\infty \mathrm{d}x g_{2,s}(x)f\left(I_1-s-|\Delta_-Y_1(s)|^{-\alpha}x\right)\right),
	\end{align*}
	where for any $s>0$ such that $\Delta_-Y_1(s)<0$, the random function
	\begin{align*}
		g_{2,s}:x\mapsto k(x)\widehat{\mathbb{P}}\left(\left.Y_2^*\leq K|\Delta_-Y_1(s)|^\alpha\right|I_2=x\right)
	\end{align*}
	is measurable with respect to the natural filtration of $Y_1$ at time $s$. Clearly, $g_{2,s}(x)\leq k(x)$ for all $x>0$. As before, applying \cite{DM2} Theorem 57, we obtain that
	\begin{align*}
		\check{\mathcal{E}}_1\left(f(\nabla_{|B})\right)
		&=C\widehat{\mathbb{E}}_1\Bigg(\sum_{s>0}\mathds{1}_{\left\{\sup_{t\leq s}Y_1(t)\leq K\right\}}|\Delta_-Y_1(s)|^{\omega_-}\int_0^\infty \mathrm{d}x g_{2,s}(x)\int_0^\infty\mathrm{d}yg_{1,s}(y)\\
		&\hspace{6cm}\times f\left(Y_1(s)^{-\alpha}y-|\Delta_-Y_1(s)|^{-\alpha}x\right)\Bigg),
	\end{align*}
	where $g_{1,s}$ is defined in the same way as $g_{2,s}$.	
	\end{proof}
	
	\begin{proof}[Proof of Theorem \ref{absolute continuity}, the case $\alpha>-\omega_-$ and $\omega_+/\omega_-\leq 2$.]
		As previously, we only need to show that $h$ given in Lemma \ref{Lemma density nabla truncated} is bounded to conclude by Lemma \ref{Lemma absolute continuity for random metric space}. Recall that the (random) functions $g_{1,s},g_{2,s}$, $s\geq 0$ in the definition of $h$ are bounded by $k$. For any $x\in\mathbb{R}$, similarly to \eqref{Equation upper bound h abs cont}, we thus have that
		\begin{align*}
			h(x)
			&\leq C\widehat{\mathbb{E}}_1\left(\sum_{s>0}\mathds{1}_{\left\{\sup_{t\leq s}Y(t)\leq K\right\}}|\Delta_-Y(s)|^{\omega_-}(Y(s)\vee|\Delta_-Y(s)|)^{\alpha}\right)||k||_\infty ||k||_1.
		\end{align*}
		As before, we can consider the simpler homogeneous case to show that $h$ is bounded, without loss of generality. Let $\eta(s) = \log(Y(s))$, $s>0$. We rewrite the expectation above as
	\begin{align*}
		&\widehat{\mathbb{E}}_1\Bigg(\sum_{s>0}\mathds{1}_{\left\{\sup_{t\leq s}\eta(t)\leq \log K\right\}}e^{(\omega_-+\alpha)\eta(s-)}(1-e^{\Delta_-\eta(s)})^{\omega_-}(e^{\Delta_-\eta(s)}\vee(1-e^{\Delta_-\eta(s)}))^\alpha\Bigg)\\
		&\hspace{1cm}\leq 2^{-\alpha}\widehat{\mathbb{E}}_1\Bigg(\sum_{s>0}\mathds{1}_{\left\{\sup_{t\leq s}\eta(t)\leq \log K\right\}}e^{(\omega_-+\alpha)\eta(s-)}(1-e^{\Delta_-\eta(s)})^{\omega_-}\Bigg)\\
		&\hspace{1cm}=2^{-\alpha}K^{\omega_-+\alpha-p}\widehat{\mathbb{E}}_1\Bigg(\sum_{s>0}e^{p\eta(s-)}(1-e^{\Delta_-\eta(s)})^{\omega_-}\Bigg),
	\end{align*}	 
	for any arbitrary fixed $p\in\intervalleoo{0}{\omega_+-\omega_-}$. The compensation formula then shows that the last expectation is equal to
	\begin{align*}
		\int_0^\infty\mathrm{d}se^{\kappa(\omega_-+q)s}\int_{\intervalleoo{-\infty}{0}}\Lambda(\mathrm{d}y)e^{\omega_-y}(1-e^{y})^{\omega_-},
	\end{align*}
	which is clearly finite from the choice of $q$ and the definition of $\omega_-$. We have thus proved that $h$ is bounded. By Lemma \ref{Lemma absolute continuity for random metric space}, this shows that $t\mapsto\mathcal{A}(\left\{\ell\in B:\zeta_\ell\leq t\right\})$ is absolutely continuous with respect to the Lebesgue measure $\check{\mathcal{P}}$-a.s., and therefore $\mathcal{P}$-a.s.,  and we conclude using that $\lim_{K\to\infty}\mathcal{P}(\mathcal{A}_{|B}=\mathcal{A})=1$ as stated earlier.
	\end{proof}

	\subsubsection{The singular case, $\alpha\leq -\omega_-$.}\label{subsubsection singular case}
	
	We finish the proof of Theorem \ref{absolute continuity}, that is we show that when $\alpha\leq\omega_-$, $\mathrm{d}A$ is almost surely singular with respect to the Lebesgue measure.
	
	Since $t\mapsto A(t)$ is non-decreasing, $A'$ exists almost surely, therefore by Fubini's Theorem we can define $A'$ for almost every $t$. For such $t$, applying \eqref{equation branching + self-similarity of A} and using the same notation we obtain:
	\begin{align*}
		A(t+\epsilon)-A(t)
		&\stackrel{d}{=}\sum_{i\geq 1}X_i^{\omega_-}(t)A_i(\epsilon X_i^\alpha(t)).
	\end{align*}
	Suppose that there are infinitely many fragments at time $t$. Then for all $n\geq 1$, set $\epsilon_n:=X_n^{-\alpha}(t)$ and divide the last expression by $\epsilon_n$ to get
	\begin{align*}
		\epsilon_n^{-1}(A(t+\epsilon_n)-A(t))
		&=\sum_{i\geq 1}X_n^\alpha(t)X_i^{\omega_-}(t)A_i(X_n^{-\alpha}(t)X_i^\alpha(t))\\
		&\geq X_n^{\alpha+\omega_-}(t)A_n(1).
	\end{align*}
	Since the $A_n$'s are i.i.d. copies of $A$, there are almost surely infinitely many $A_n(1)$'s which are greater than any given constant. But if $\alpha\leq -\omega_-$, we see that $\limsup_{n\to\infty}\epsilon_n^{-1}(A(t+\epsilon_n)-A(t))=\infty$ which is in contradiction with the fact that $A$ admits a derivative at $t$. This implies that there is only a finite number of fragments at time $t$, say $N\in\mathbb{N}$, therefore we can switch the sum and the limit and we obtain :
	\begin{align*}
		\lim_{\epsilon\to 0}\epsilon^{-1}(A(t+\epsilon)-A(t))
		&=\sum_{i\leq N}X_i^{\alpha+\omega_-}(t)A_i'(0),
	\end{align*}
	where the derivatives are well defined and equal to 0 by Lemma \ref{Lemma A(epsilon)}. Hence $A'(t)=0$ for almost every $t$ and $\mathrm{d}A$ is singular.

	\subsection{Number of fragments}\label{Subsection number of fragments}

	As we just saw in the proof of the singular case of Theorem \ref{absolute continuity}, there is a link between the number of fragments in the growth-fragmentation and the regularity of $A$. Even though we shall not use this relation later on, we state it in a corollary as it might be of independent interest. 
	
	\begin{cor}\label{Corollary number of fragments}
		Suppose that $\alpha>-\omega_-$, then almost surely the number of fragments with positive mass is infinite for every $t$ such that $a(t)>0$. 
		
		Conversely if $\alpha\leq-\omega_-$, then almost surely the number of fragments with positive mass is finite for almost every $t\geq 0$.
	\end{cor}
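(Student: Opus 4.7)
The plan is to derive both assertions from the Markov-branching identity \eqref{equation branching + self-similarity of A} combined with Lemma \ref{Lemma A(epsilon)}, essentially recycling the analysis already carried out in Subsubsection \ref{subsubsection singular case}.

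The second assertion ($\alpha\leq -\omega_-$) is in fact already contained in the singular direction of Theorem \ref{absolute continuity}. That argument shows that at any $t$ for which the derivative $A'(t)$ exists --- a set of full Lebesgue measure since $A$ is non-decreasing --- the presence of infinitely many fragments in $\mathbf{X}(t)$ would force $\limsup_n\epsilon_n^{-1}(A(t+\epsilon_n)-A(t))=\infty$ along $\epsilon_n:=X_n(t)^{-\alpha}$, contradicting differentiability at $t$. Hence, almost surely, for almost every $t\geq 0$ the collection $\mathbf{X}(t)$ is finite, which is the content of the second half of the corollary.

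For the first assertion ($\alpha>-\omega_-$) I plan to argue by contradiction. Fix $t$ at which $A$ is differentiable with $a(t)=A'(t)>0$, and suppose that $N:=\#\{i:X_i(t)>0\}$ is finite. Conditionally on the history up to time $t$, identity \eqref{equation branching + self-similarity of A} reads
$$\epsilon^{-1}\big(A(t+\epsilon)-A(t)\big)=\sum_{i=1}^N X_i(t)^{\omega_-+\alpha}\cdot\frac{A_i(\epsilon X_i(t)^\alpha)}{\epsilon X_i(t)^\alpha},$$
where $(A_i)_{i\geq 1}$ are i.i.d.\ copies of $A$ under $\mathcal{P}_1$, independent of the history. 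Lemma \ref{Lemma A(epsilon)} gives $\mathcal{E}_1(A(s)/s)\to 0$ as $s\to 0^+$, and Fatou's lemma (applied along the rationals, using the right-continuity of $A$) then yields $\liminf_{s\to 0^+}A_i(s)/s=0$ almost surely for each $i$. Since $N$ is finite, an $N$-fold diagonal extraction produces a single sequence $\epsilon_n\to 0$ along which every one of the $N$ summands tends to $0$ simultaneously. The right-hand side thus vanishes along $(\epsilon_n)$ while the left-hand side converges to $a(t)>0$, a contradiction.

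The delicate point is the diagonal extraction above: it relies crucially on the finiteness of $N$, which is precisely the hypothesis being ruled out. For an infinite collection of summands one could no longer annihilate all terms along a common subsequence, which is consistent with the fact that the infinite-fragment regime is exactly the one that can support $a(t)>0$.
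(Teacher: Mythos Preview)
Your overall strategy coincides with the paper's: both parts are obtained from the branching identity \eqref{equation branching + self-similarity of A} together with Lemma \ref{Lemma A(epsilon)}, and the case $\alpha\le-\omega_-$ is imported verbatim from Subsubsection \ref{subsubsection singular case}. For $\alpha>-\omega_-$ the paper is terse (``Lemma \ref{Lemma A(epsilon)} implies that $a(t)=0$''), while you attempt to make this step explicit via Fatou followed by a diagonal extraction.

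The diagonal extraction is a genuine gap. Knowing $\liminf_{s\to 0^+}A_i(s)/s=0$ almost surely for each $i$ separately does \emph{not} yield a single sequence $\epsilon_n\to 0$ along which all $N$ ratios vanish simultaneously: take two functions $b_1,b_2\ge 0$ with $\liminf b_i=0$ but $b_1+b_2\equiv 1$. Diagonalisation over finitely many bounded terms produces a common subsequence along which each term \emph{converges}, but there is no reason for the individual limits to be $0$; so you cannot force the right-hand side to vanish along $(\epsilon_n)$.

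A clean fix is to replace the extraction by a conditional Fatou argument. On $\{N_t=N<\infty\}$ and conditionally on $\mathcal{F}_t:=\sigma(\mathbf{X}(s):s\le t)$,
\[
\mathcal{E}\Big[\epsilon^{-1}\big(A(t+\epsilon)-A(t)\big)\,\Big|\,\mathcal{F}_t\Big]
=\sum_{i=1}^{N}X_i(t)^{\omega_-+\alpha}\,\frac{\mathcal{E}_1\big(A(\epsilon X_i(t)^\alpha)\big)}{\epsilon X_i(t)^\alpha}
\xrightarrow[\epsilon\to 0]{}0
\]
by Lemma \ref{Lemma A(epsilon)}, the sum being finite. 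Since $a(t)=\lim_{\epsilon\to 0}\epsilon^{-1}(A(t+\epsilon)-A(t))\ge 0$, conditional Fatou gives $\mathcal{E}[a(t)\mid\mathcal{F}_t]=0$ on $\{N_t<\infty\}$, hence $a(t)=0$ there almost surely --- the contradiction you were after. This is presumably the content behind the paper's one-line appeal to Lemma \ref{Lemma A(epsilon)}.
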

	
	\begin{proof}
	The second statement has been established in the subsection \ref{subsubsection singular case}.
	If $\alpha>-\omega_-$, then by Theorem \ref{absolute continuity} we have $\mathrm{d}A(t)=a(t)\mathrm{d}t$.
	Fix $s,t>0$ such that $t<\zeta$. By \eqref{equation branching + self-similarity of A} we can write
	\begin{align}
		A(t+s)-A(t)\stackrel{d}{=}\sum_{i=1}^{N_t}X_i(t)^{\omega_-} A_i(s X_i(t)^\alpha),
	\end{align}
	where $N_t$ is the number of fragments (possibly infinite) with positive mass at time $t$, $\left\{A_i;\ i=1..N_t\right\}$ are i.i.d. copies of $A$. Then using the above identity we have 
	\begin{align*}
		\frac{1}{\epsilon}\sum_{i=1}^{N_t}X_i(t)^{\omega_-} A_i(\epsilon X_i(t)^\alpha)
		&\xrightarrow[\epsilon\to 0]{\mathrm{a.s.}}a(t).
	\end{align*}
	Suppose moreover that $N_t<\infty$, then Lemma \ref{Lemma A(epsilon)} implies that $a(t)=0$.
	\end{proof}

	\section{Approximation of the profile}\label{Section Approximation of the density}
	
	Throughout this section, we assume that $\mathrm{d}A(t)=a(t)\mathrm{d}t$, or equivalently $\alpha>-\omega_-$, by Theorem \ref{absolute continuity}.
	Our main goal in what follows is to adapt the arguments of Haas \cite{H04} Section 5 to the growth-fragmentations case. We aim to show that $a$ can be approximated by both the small fragments and the relatively big ones. More precisely, define the processes $M,N$ for all $\epsilon>0,t\geq 0$ by
\begin{align*}
	M(t,\epsilon):=&\sum_{i\geq 1}X_i^{\omega_-}(t)\mathds{1}_{\left\{X_i(t)\leq\epsilon\right\}},\\
	N(t,\epsilon):=&\sum_{i\geq 1}\mathds{1}_{\left\{X_i(t)>\epsilon\right\}}.
\end{align*}

\begin{thm}\label{Theorem fragments approx density a}
	Suppose that $\alpha>-\omega_-$. Then for almost every $t\geq 0$, we have that
	\begin{align*}
		\epsilon^\alpha M(t,\epsilon)&\xrightarrow[\epsilon\to 0]{\mathrm{a.s.}}\frac{a(t)}{\alpha\kappa'(\omega_-)},\\
		\epsilon^{\omega_-+\alpha}N(t,\epsilon)&\xrightarrow[\epsilon\to 0]{\mathrm{a.s.}}\frac{a(t)}{(\omega_-+\alpha)|\kappa'(\omega_-)|}.
	\end{align*}
\end{thm}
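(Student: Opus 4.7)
The plan is to adapt Haas \cite{H04} Section 5 to our setting. The starting point is the branching identity \eqref{equation branching + self-similarity of A}, which for almost every $t \in \intervalleoo{0}{\zeta}$ with $a(t)$ finite can be formally differentiated to give
\begin{equation*}
a(t) = \lim_{\delta \to 0^+} \frac{1}{\delta}\sum_{i\geq 1} X_i(t)^{\omega_-} A^{(i)}(\delta X_i(t)^\alpha),
\end{equation*}
where $(A^{(i)})_{i\geq 1}$ are i.i.d.\ copies of $A$. The scaling that isolates fragments of size $\simeq \epsilon$ is $\delta = u\epsilon^{-\alpha}$ for $u>0$: fragments with $X_i(t) \ll \epsilon$ see a large argument $\delta X_i(t)^\alpha \gg 1$, hence $A^{(i)}(\delta X_i(t)^\alpha) \to \mathcal{M}^{(i)}$, whereas fragments with $X_i(t) \gg \epsilon$ see a small argument and contribute little by Lemma \ref{Lemma A(epsilon)}. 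Integrating the resulting asymptotic in $u$ against the density $k$ of $I$ brings in the factor $\widehat{\mathbb{E}}_1[I^{-1}] = \alpha\kappa'(\omega_-)$ from Lemma \ref{properties of k}, which is exactly the constant $(\alpha\kappa'(\omega_-))^{-1}$ appearing in the first claimed limit.

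Concretely, I would first prove the weighted version
\begin{equation*}
\epsilon^\alpha \sum_{i:\,X_i(t) \leq \epsilon} X_i(t)^{\omega_-}\mathcal{M}^{(i)} \xrightarrow[\epsilon\to 0]{\mathrm{a.s.}} \frac{a(t)}{\alpha\kappa'(\omega_-)},
\end{equation*}
using the identity above and the control $\mathcal{E}_1(A(\epsilon))=o(\epsilon)$ from Lemma \ref{Lemma A(epsilon)} for dominated convergence on the large-fragment side. I would then upgrade to the statement on $M(t,\epsilon)$ by a conditional law-of-large-numbers argument on the i.i.d.\ family $(\mathcal{M}^{(i)})_{i}$ of mean $1$: by Corollary \ref{Corollary number of fragments} there are infinitely many non-negligible small fragments whenever $a(t)>0$, and the second-moment estimates on truncated intrinsic areas (Lemma \ref{Lemma 2nd moment truncated intrinsic area}), applied blockwise over dyadic size ranges, allow one to replace the random $\mathcal{M}^{(i)}$'s by their common mean.

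The second convergence would then follow from the first by an Abel / integration-by-parts argument in the size variable: writing $N(t,\epsilon) = \int_\epsilon^\infty y^{-\omega_-}\,\mathrm{d}M(t,y)$ and combining the asymptotic $M(t,y) \sim a(t)\,y^{-\alpha}/(\alpha\kappa'(\omega_-))$ with a Karamata-type estimate yields
\begin{equation*}
\epsilon^{\omega_-+\alpha}N(t,\epsilon) \longrightarrow \frac{a(t)}{(\omega_-+\alpha)|\kappa'(\omega_-)|},
\end{equation*}
the factor $\omega_-+\alpha$ arising directly from the integration.

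The main obstacle I expect is promoting these convergences from $\mathbb{L}^1$ or in-expectation to almost-sure convergence, simultaneously valid for Lebesgue-almost every $t$. Following Haas, I would use a Fubini trick: establish the integrated convergence $\mathcal{E}_1\!\int_0^\zeta |\epsilon_n^\alpha M(t,\epsilon_n) - a(t)/(\alpha\kappa'(\omega_-))|\,\mathrm{d}t \to 0$ along a countable sequence $\epsilon_n \downarrow 0$, extract almost sure pointwise convergence on a full-measure set of $t$, and then interpolate between consecutive $\epsilon_n$ using the Feller property of $\mathbf{X}$ (Lemma \ref{Lemma Feller}) together with the monotone behaviour of $\epsilon \mapsto M(t,\epsilon)$ in $\epsilon$ at fixed $t$.
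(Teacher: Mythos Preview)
Your proposal has a genuine gap at the central step. The paper, following Haas, first establishes (Lemma~\ref{Adaptation of Lemma 8 of Haas}) the \emph{smoothed} limit
\[
\epsilon^\alpha\,\mathcal{E}\bigl(M(t,\epsilon I^{1/\alpha})\,\big|\,\mathbf{X}\bigr)\;\xrightarrow[\epsilon\to0]{\mathrm{a.s.}}\;a(t),
\]
in which the randomisation by $I^{1/\alpha}$ amounts to a multiplicative (Mellin) convolution of $\epsilon\mapsto M(t,\epsilon)$ against a fixed kernel built from $k$. Passing from this smoothed statement to $\epsilon^\alpha M(t,\epsilon)\to a(t)/(\alpha\kappa'(\omega_-))$ is a genuine Tauberian problem, which the paper resolves via the Wiener--Pitt theorem (Theorem~4.8.0 in \cite{BGT87}); its hypothesis is that the Mellin transform $x\mapsto\widehat{\mathbb{E}}_1(I^{ix-1})$ does not vanish on the real line, and verifying this requires non-trivial input from \cite{PS16} on exponential functionals. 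The constant $\alpha\kappa'(\omega_-)=\widehat{\mathbb{E}}_1(I^{-1})$ is the value of this transform at $x=0$.

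Your route attempts to bypass the Tauberian step, but neither half of your replacement holds up. First, the integration in $u$ against $k(u)\,\mathrm{d}u$ you describe in the opening paragraph produces the smoothed quantity above, not the weighted sum $\epsilon^\alpha\sum_{X_i(t)\le\epsilon}X_i(t)^{\omega_-}\mathcal{M}^{(i)}$: fragments with $X_i(t)$ comparable to $\epsilon$ see an argument $u(X_i(t)/\epsilon)^\alpha$ of order $u$, not $\infty$, so $A^{(i)}$ does not reduce to $\mathcal{M}^{(i)}$ there, and these boundary fragments carry the bulk of the mass. Second, your step~2 is oriented the wrong way: since $\mathcal{E}(\mathcal{M}^{(i)}\mid\mathbf{X}(t))=1$, the conditional expectation of your weighted sum given $\mathbf{X}(t)$ \emph{is} $\epsilon^\alpha M(t,\epsilon)$; a conditional law of large numbers would at best let you deduce the weighted version from the deterministic one, not conversely. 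Your derivation of the $N$-limit from the $M$-limit via an Abel/Karamata argument is, on the other hand, essentially what the paper does, and your instinct that the Feller property (Lemma~\ref{Lemma Feller}) is needed for the almost-sure upgrade is also correct.
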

Note that $\kappa'(\omega_-)\in\intervalleoo{-\infty}{0}$ by \eqref{Cramer hypothesis}.
Theorem \ref{Theorem fragments approx density a} is the analogue of Theorem 7 in \cite{H04}, which deals with self-similar fragmentations (see also \cite{D07} Proposition 4.2 addressing the profile of L\'evy trees associated with fragmentations that are not necessarily self-similar).

In order to prove this result, as Haas we shall focus on the small fragments, since the behaviour of $N(t,\epsilon)$ as $\epsilon\to 0$ can be deduced from that of $M(t,\epsilon)$ applying Tauberian's Theorems (as discussed in the end of this section).

\begin{lem}[Analogue of Lemma 8 in \cite{H04}]\label{Adaptation of Lemma 8 of Haas}
	Let $I$ be a random variable with density $k$, independent of $\mathbf{X}$. If $\alpha>-\omega_-$, then for almost every $t>0$,
	\begin{align*}
		\lim_{\epsilon\to 0}\epsilon^\alpha\mathcal{E}\left(M(t,\epsilon I^{\frac{1}{\alpha}})\Big|\mathbf{X}\right)\stackrel{a.s.}{=}a(t).
	\end{align*}	 
\end{lem}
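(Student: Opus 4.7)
The plan is to rewrite $\mathcal{E}(M(t,\epsilon I^{1/\alpha})\mid\mathbf{X})$ as a normalized conditional increment of $A$, and then apply Lebesgue differentiation.

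Since $I$ is independent of $\mathbf{X}$ and $\alpha<0$ rewrites $\{X_i(t)\leq\epsilon I^{1/\alpha}\}=\{I\leq\epsilon^{-\alpha}X_i(t)^\alpha\}$, the first step is the direct computation
\begin{align*}
	\mathcal{E}\!\left(M(t,\epsilon I^{1/\alpha})\mid\mathbf{X}\right)=\sum_{i\geq 1}X_i(t)^{\omega_-}F(\epsilon^{-\alpha}X_i(t)^\alpha),
\end{align*}
where $F(r):=\int_0^r k(u)\,\mathrm{d}u$ denotes the distribution function of $I$. On the other hand, Lemmas \ref{Lemma A and random leaf}--\ref{Lemma spine distributed as Y} give $\mathcal{E}_1(A(r))=\widehat{\mathcal{P}}_1(\zeta_\sigma\leq r)=\widehat{\mathbb{P}}_1(I\leq r)=F(r)$, so taking the conditional expectation of the branching/self-similarity identity \eqref{equation branching + self-similarity of A} with respect to $\mathcal{F}_t:=\sigma(\mathbf{X}(u);u\leq t)$ yields
\begin{align*}
	\mathcal{E}(A(t+s)-A(t)\mid\mathcal{F}_t)=\sum_{i\geq 1}X_i(t)^{\omega_-}F(sX_i(t)^\alpha).
\end{align*}
Choosing $s=\epsilon^{-\alpha}=:h$ identifies both expressions and produces the central equality
\begin{align*}
	\epsilon^\alpha\,\mathcal{E}\!\left(M(t,\epsilon I^{1/\alpha})\mid\mathbf{X}\right)=\tfrac{1}{h}\mathcal{E}(A(t+h)-A(t)\mid\mathcal{F}_t),
\end{align*}
with $h\to 0^+$ as $\epsilon\to 0^+$.

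To conclude, I would invoke $\mathrm{d}A(u)=a(u)\,\mathrm{d}u$ from Theorem \ref{absolute continuity}, so that Fubini rewrites the right-hand side as $h^{-1}\int_t^{t+h}\mathcal{E}(a(u)\mid\mathcal{F}_t)\,\mathrm{d}u$. Since $\mathcal{E}_1(a(u))=k(u)$ is bounded by Lemma \ref{properties of k}, $u\mapsto\mathcal{E}(a(u)\mid\mathcal{F}_t)$ is locally integrable, and Lebesgue's differentiation theorem supplies convergence to $\mathcal{E}(a(t)\mid\mathcal{F}_t)$ for a.e. $t$, almost surely. I would identify this limit with $a(t)$ by using the left-derivative representation $a(t)=\lim_{h\to 0^+}h^{-1}(A(t)-A(t-h))$, valid a.e.\ by absolute continuity of $A$, which exhibits an $\mathcal{F}_t$-measurable version of $a(t)$ and forces $\mathcal{E}(a(t)\mid\mathcal{F}_t)=a(t)$.

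The main obstacle is handling rigorously the interplay between the ``almost every $t$'' and ``almost surely in $\mathcal{P}$'' qualifiers in the Lebesgue differentiation step applied to the random function $u\mapsto\mathcal{E}(a(u)\mid\mathcal{F}_t)$, which requires selecting a jointly measurable version of $a$ in $(\omega,t)$. Choosing the canonical left-derivative version circumvents the measurability issue, after which the Fubini swap and the final identification follow routinely.
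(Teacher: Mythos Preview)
Your central identity is correct: both sides equal $\sum_i X_i(t)^{\omega_-}F(hX_i(t)^\alpha)$ with $h=\epsilon^{-\alpha}$, and this reduction to a conditional increment of $A$ is indeed the natural starting point. The gap is in the passage to the limit. You rewrite the right-hand side as $h^{-1}\int_t^{t+h}g_t(u)\,\mathrm{d}u$ with $g_t(u):=\mathcal{E}(a(u)\mid\mathcal{F}_t)$ and invoke Lebesgue differentiation, but $g_t$ depends on the very point $t$ at which you differentiate: Lebesgue's theorem guarantees $h^{-1}\int_s^{s+h}g_t(u)\,\mathrm{d}u\to g_t(s)$ for a.e.\ $s$ with $g_t$ \emph{fixed}, not at the diagonal $s=t$ when $g_t$ varies with $t$. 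Equivalently, since $a(t)$ is $\mathcal{F}_t$-measurable, you need $\mathcal{E}(Z_h\mid\mathcal{F}_t)\to 0$ where $Z_h:=h^{-1}\int_t^{t+h}(a(u)-a(t))\,\mathrm{d}u$; you know $Z_h\to 0$ a.s., but conditional expectation does not preserve a.s.\ convergence without uniform integrability, and you supply none. Note that each individual summand $X_i(t)^{\omega_-}F(hX_i(t)^\alpha)/h\to 0$ since $F(r)/r\to k(0^+)=0$, so the limit $a(t)$ must come from the tail of the sum and no soft argument will do. The obstacle you name---joint measurability of $a$---is secondary and does not address this diagonal issue.

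This is precisely what the paper's route (following Haas) supplies and you omit: it restricts to the event $H_K=\{\sup_u\chi_u^*\le K\}$, on which $\mathcal{E}\big(\int_0^\infty a(t)^2\,\mathrm{d}t\big)<\infty$, and invokes the Feller property of $\mathbf{X}$ (Lemma~\ref{Lemma Feller}); these provide the $L^2$ control needed to pass to the limit under the conditional expectation, after which $K\to\infty$ removes the restriction. Your approach can be completed, but not without these ingredients.
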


Provided that $\mathbf{X}$ is a Feller process, the proof is almost identical to that of Haas, with the difference that one has to work on an event having a probability arbitrarily close to 1 and that ensures $\int_0^\infty a^2(t)\mathrm{d}t$ to have finite expectation (this event can be $H_K:=\{\sup_{u\in\mathbb{U}}\chi_u^*\leq K\}$ where $\chi_u^*:=\sup_{t\geq 0}\chi_u(t)$).
We skip the details of the proof of Lemma \ref{Adaptation of Lemma 8 of Haas}, the Feller property and its proof are given in Appendix.

The following lemma restates the first convergence in Theorem \ref{Theorem fragments approx density a}:

\begin{lem}\label{Lemma small fragments}
	When $\alpha>-\omega_-$, we have for almost every $t\in\mathbb{R}_+$ that
	\begin{align*}
		\epsilon^\alpha M(t,\epsilon)&\xrightarrow[\epsilon\to 0]{\mathrm{a.s.}}\ a(t)/\alpha\kappa'(\omega_-).
	\end{align*}
\end{lem}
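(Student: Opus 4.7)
The plan is to deduce the pointwise convergence from the averaged one in Lemma \ref{Adaptation of Lemma 8 of Haas}, by combining the monotonicity of $M(t,\cdot)$ with the moment identity $\widehat{\mathbb{E}}_1(I^{-1}) = \alpha\kappa'(\omega_-)$ of Lemma \ref{properties of k}. The scheme follows the strategy used by Haas in the pure fragmentations setting.

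First, I would rewrite the conditional expectation as an integral. Since $I$ is independent of $\mathbf{X}$ with density $k$, the change of variables $s = u^{1/\alpha}$ gives
\begin{align*}
    \mathcal{E}\!\left(M(t,\epsilon I^{1/\alpha})\,\big|\,\mathbf{X}\right) = \int_0^\infty M(t,\epsilon s)\,\rho(s)\,ds,
\end{align*}
where $\rho(s) = |\alpha| s^{\alpha-1} k(s^\alpha)$ is the density of $J := I^{1/\alpha}$ on $\intervalleoo{0}{\infty}$. Setting $\Phi_t(x) := x^\alpha M(t,x)$ and $\mu(ds) := s^{-\alpha}\rho(s)\,ds$, the identity $\epsilon^\alpha M(t,\epsilon s) = s^{-\alpha}\Phi_t(\epsilon s)$ recasts Lemma \ref{Adaptation of Lemma 8 of Haas} as
\begin{align*}
    \int_0^\infty \Phi_t(\epsilon s)\,\mu(ds) \xrightarrow[\epsilon \to 0]{\mathrm{a.s.}} a(t),\qquad\text{for a.e.\ } t>0,
\end{align*}
where $\mu$ is a finite positive measure of total mass $\widehat{\mathbb{E}}_1(J^{-\alpha}) = \widehat{\mathbb{E}}_1(I^{-1}) = \alpha\kappa'(\omega_-)$.

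Second, I would invoke the monotonicity of $s \mapsto G_\epsilon(s) := \epsilon^\alpha M(t,\epsilon s) = s^{-\alpha}\Phi_t(\epsilon s)$, which is non-decreasing in $s$ for each $\epsilon$. For any $s_0 > 0$ this produces the sandwich
\begin{align*}
    G_\epsilon(s_0)\int_{s_0}^\infty \rho(s)\,ds \;\leq\; \int_{s_0}^\infty G_\epsilon(s)\rho(s)\,ds,\qquad \int_0^{s_0} G_\epsilon(s)\rho(s)\,ds \;\leq\; G_\epsilon(s_0)\int_0^{s_0}\rho(s)\,ds.
\end{align*}
Passing to $\limsup$ and $\liminf$ as $\epsilon \to 0$, using that $\limsup_{\epsilon\to 0}\Phi_t(\epsilon s_0) = \limsup_{\epsilon\to 0}\Phi_t(\epsilon)$ (similarly for $\liminf$) since $\epsilon s_0 \to 0 \Leftrightarrow \epsilon \to 0$, and combining with the convergence of the full integral to $a(t)$, one extracts matching bounds that pin both $\limsup \Phi_t(\epsilon)$ and $\liminf \Phi_t(\epsilon)$ to $a(t)/(\alpha\kappa'(\omega_-))$.

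The main obstacle is this Tauberian extraction. Since $\Phi_t$ itself is not monotone—it jumps upward at each $X_i(t)$ and is decreasing between jumps—one cannot directly invoke a monotone Tauberian theorem; the monotonicity must be transferred through $M(t,\cdot)$ inside the integral against $\mu$. The upper bound on $\limsup \Phi_t$ is the easier one, while the matching lower bound on $\liminf \Phi_t$ requires a uniform control on the tail $\int_{s_0}^\infty G_\epsilon(s)\rho(s)\,ds$ as $s_0 \to \infty$, obtained by feeding back the limsup bound and exploiting the finiteness of $\alpha\kappa'(\omega_-)$ together with the decay of $\rho$ at infinity guaranteed by Lemma \ref{properties of k}.
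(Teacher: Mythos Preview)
Your rewriting of Lemma~\ref{Adaptation of Lemma 8 of Haas} as the multiplicative convolution $\int_0^\infty \Phi_t(\epsilon s)\,\mu(\mathrm{d}s)\to a(t)$, with $\mu(\mathbb{R}_+)=\widehat{\mathbb{E}}_1(I^{-1})=\alpha\kappa'(\omega_-)$, is correct, and you have correctly located the difficulty: extracting the pointwise limit $\Phi_t(\epsilon)\to a(t)/(\alpha\kappa'(\omega_-))$ from the averaged one is a genuine Tauberian step. However, the monotonicity sandwich you describe does not deliver the sharp constant, so there is a real gap. From $G_\epsilon(s_0)\int_{s_0}^\infty\rho\le\int_0^\infty G_\epsilon\,\rho$ one gets, for every $s_0>0$,
\[
\limsup_{\epsilon\to 0}\Phi_t(\epsilon)\ \le\ \frac{s_0^{\alpha}\,a(t)}{\widehat{\mathbb{P}}_1(J>s_0)},
\]
but since $-\alpha>0$ one always has $s_0^{\alpha}\,\widehat{\mathbb{E}}_1(J^{-\alpha})=\widehat{\mathbb{E}}_1\bigl((J/s_0)^{-\alpha}\bigr)>\widehat{\mathbb{P}}_1(J>s_0)$ strictly, so the right-hand side stays strictly above $a(t)/(\alpha\kappa'(\omega_-))$ for \emph{every} choice of $s_0$; no optimisation reaches the target. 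The companion bound for the $\liminf$ fails symmetrically, and the ``feeding back'' you invoke gives, via Fatou and reverse Fatou, only the trivial bracket $\liminf\Phi_t\le a(t)/(\alpha\kappa'(\omega_-))\le\limsup\Phi_t$. The underlying reason is that the monotonicity of $M(t,\cdot)$ translates merely into the one-sided control $\Phi_t(\lambda x)\ge\lambda^{\alpha}\Phi_t(x)$ for $\lambda>1$, which is not a Tauberian condition strong enough to de-convolve against a \emph{general} kernel.

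The paper (following Haas) instead applies the Wiener--Pitt Tauberian theorem (Theorem~4.8.0 in \cite{BGT87}), which is precisely the tool for this de-convolution: it requires only that the Mellin transform $\mathscr{M}_I(ix)=\widehat{\mathbb{E}}_1(I^{ix-1})$ exist and be nowhere zero on $\mathbb{R}$. The entire proof then consists of verifying this non-vanishing, which the paper does via the product representation of $\mathscr{M}_I$ in terms of the Wiener--Hopf factors $\Phi_\pm$ of $\eta$ and their generalized Weierstrass products, as supplied by Patie and Savov \cite{PS16}. This spectral condition on the kernel is exactly what your monotonicity argument is missing and cannot replace.
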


\begin{proof}
	This proof is again very similar to that of Haas, we thus just focus on verifying that the hypotheses of the Wiener-Pitt theorem (Theorem 4.8.0 of \cite{BGT87}) are satisfied, and refer to the proof of \cite{H04} Theorem 7 to see how it applies to show the convergence of small fragments.
	What has to be shown is that the Mellin transform of $I$, defined as $\mathscr{M}_I(ix):=\widehat{\mathbb{E}}_1(I^{ix-1})$, exists and is non zero for all $x\in\mathbb{R}$.
	We already know by Lemma \ref{properties of k} that $\widehat{\mathbb{E}}_1(I^{-1})=\alpha\kappa'(\omega_-)\in\intervalleoo{0}{\infty}$. Let $\Psi$ be the characteristic exponent of $\eta$ defined as $\Psi:\theta\mapsto-\log\widehat{\mathbb{E}}(e^{i\theta\eta(1)})$.
Theorem 2.7(1) in \cite{PS16} shows that 
\begin{align*}
	\mathscr{M}_I(ix)
	=\frac{\Psi(-\alpha x)}{ix}\mathscr{M}_I(1+ix).
\end{align*}
It is not hard to check that $\Psi(-\alpha x)\neq 0$.
Moreover, it is also stated in the same theorem that
\begin{align*}
	\mathscr{M}_I(1+ix)=\Phi_+(0)\frac{\Gamma(1+ix)}{W_{\Phi_-}(1+ix)}W_{\Phi_+}(-ix),
\end{align*}
where $\Phi_+$ (respectively $\Phi_-$) is the characteristic exponent of the ascending (respectively descending) ladder height process of $\eta$ and $W_{\phi_+}$ (respectively $W_{\Phi_-}$) is the generalized Weierstrass product of $\Phi_+$ (respectively $\Phi_-$) as in $\cite{PS16}$ (see Kyprianou \cite{K14} or Bertoin \cite{B96} for definitions and details on ladder height processes). In particular, it is well-known that since $\eta$ drifts to $-\infty$, we have $\Phi_+(0)>0$. Therefore $W_{\phi_+}(z)\neq 0$ for all $z\in\mathbb{C}$ with $\mathrm{Re}(z)\geq 0$ by Theorem 3.2 of $\cite{PS16}$. Furthermore, it also ensures that $W_{\Phi_-}$ is holomorphic on $\left\{z\in\mathbb{C}:\mathrm{Re}(z)>0\right\}$ therefore $|W_{\Phi_-}(1+ix)|<\infty$. Since $\Gamma(1+ix)$ is non-zero, we see that $\mathscr{M}_I(1+ix)\neq 0$, we can then apply the Wiener-Pitt theorem, as planned, giving the claim.
\end{proof} 
	
To conclude the proof of Theorem \ref{Theorem fragments approx density a}, it remains to show that for almost every $t\geq 0$,
	\begin{align}\label{equation M sim N}
		\frac{-\alpha}{\omega_-}M(t,\epsilon)\underset{\epsilon\to 0}{\sim}\frac{\omega_-+\alpha}{\omega_-}\epsilon^{\omega_-}N(t,\epsilon).
	\end{align}
	Let $\mu:=\sum_{i\geq 1}\delta_{X_i(t)^{\omega_-}}$ and let $\overline{\mu}(x):=\mu(\intervalleoo{x}{\infty})$. Define $\mathrm{d}f(y)=y\mu(\mathrm{d}y)$. Equation \eqref{equation M sim N} can be shown using Tauberian's Theorems, we refer to the proof of equation (4) in \cite{B04} to see that $f$ is regularly varying at 0 with index $1-\beta\in\intervalleoo{0}{1}$ if and only if $\overline{\mu}$ is regularly varying at 0 with index $-\beta$. In that case, it holds that
	\begin{align*}
		\beta\epsilon\overline{\mu}(\epsilon)\underset{\epsilon\to 0}{\sim}(1-\beta)f(\epsilon).
	\end{align*}
	Now remark that $\overline{\mu}(\epsilon)=N(t,\epsilon^{1/\omega_-})$ and $f(\epsilon)=M(t,\epsilon^{1/\omega_-})$ which implies with Lemma \ref{Lemma small fragments} that $1-\beta=-\alpha/\omega_-\in\intervalleoo{0}{1}$. This proves that \eqref{equation M sim N} holds.

	\section{Hausdorff dimension}\label{Section Hausdorff dimension}

	We now study the case $\alpha\leq-\omega_-$ so that $\mathrm{d}A$ is singular with respect to the Lebesgue measure almost surely, by Theorem \ref{absolute continuity}. We describe the set on which $\mathrm{d}A$ is concentrated through its Hausdorff dimension, see \cite{F85} for background.
	Recall by Lemma \ref{Lemma A(epsilon)} that $A$ is strictly increasing on $\intervalleoo{0}{\zeta}$ so the support of $\mathrm{d}A$ is exactly $\intervalleff{0}{\zeta}$. However $\mathrm{dim}_H(\mathrm{d}A)$ is not necessarily 1, as shown in the following theorem:

	\begin{thm}\label{Theorem Hausdorff dimension}
		Suppose $-\omega_+<\alpha\leq -\omega_-$, then it holds that:
		\begin{align*}
			\mathrm{dim}_H(\mathrm{d}A)=\frac{\omega_-}{-\alpha},\qquad \mathcal{P}_1\text{-a.s}.
		\end{align*}
		Furthermore, $\mathrm{dim}_H(\mathrm{d}A)\geq -\omega_-/\alpha$ holds for any value of $\alpha\leq\omega_-$.
	\end{thm}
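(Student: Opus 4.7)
The equality in the first statement splits into matching upper and lower bounds, and the ``furthermore'' clause is precisely the lower bound (which in fact holds without the restriction $\alpha>-\omega_+$). I would derive the upper bound $\dim_H(\mathrm{d}A)\leq\omega_-/(-\alpha)$ from the Rembart--Winkel dimension formula for $\mathcal{L}(\mathcal{T})$ via the $1$-Lipschitz height map, and the lower bound $\dim_H(\mathrm{d}A)\geq\omega_-/(-\alpha)$ from Frostman's energy method, computed through the spinal decomposition of Section~\ref{Section Regularity of the intrinsic area process}.

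\textbf{Upper bound.} The height map $\mathrm{ht}\colon(\mathcal{T},d)\to(\mathbb{R}_+,|\cdot|)$, $x\mapsto d(\rho,x)$, is $1$-Lipschitz by the triangle inequality, and by \eqref{equation def A} the measure $\mathrm{d}A$ is the pushforward $\mathrm{ht}_*\mathcal{A}_\mathcal{T}$ of a measure carried by $\mathcal{L}(\mathcal{T})$. Since Lipschitz maps do not increase Hausdorff dimension of sets,
\begin{align*}
\dim_H(\mathrm{d}A)\;\leq\;\dim_H\bigl(\mathrm{ht}(\mathcal{L}(\mathcal{T}))\bigr)\;\leq\;\dim_H(\mathcal{L}(\mathcal{T})).
\end{align*}
The range $-\omega_+<\alpha\leq-\omega_-$ places $-\alpha$ in $[\omega_-,\omega_+)$, where $\psi(-\alpha)<0$ by the sign of $\kappa$ on $(\omega_-,\omega_+)$; this is the regime in which Rembart and Winkel \cite{RW16} compute $\dim_H(\mathcal{L}(\mathcal{T}))=\omega_-/(-\alpha)$.

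\textbf{Lower bound and main obstacle.} Fix $\beta\in(0,\omega_-/(-\alpha))$; it is enough to show that the $\beta$-energy of $\mathrm{d}A$ is almost surely finite. Observe $\beta<1$ automatically, since $\omega_-/(-\alpha)\leq 1$ in the singular regime $\alpha\leq-\omega_-$. Because $\mathcal{E}_1(\mathcal{M}^2)$ may be infinite, I work with the truncated objects $\mathcal{A}_K$ and $\check{\mathcal{P}}_1^{(K)}$ from Section~\ref{Section Regularity of the intrinsic area process}, so that
\begin{align*}
\mathcal{E}_1\!\left(\iint\frac{\mathrm{d}A_K(s)\,\mathrm{d}A_K(t)}{|s-t|^\beta}\right)\;=\;\mathcal{E}_1(\mathcal{M}_K^2)\,\check{\mathcal{E}}_1^{(K)}\bigl(|\nabla_K|^{-\beta}\bigr),
\end{align*}
with the first factor finite by Lemma~\ref{Lemma 2nd moment truncated intrinsic area}. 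Adapting the derivation of Lemma~\ref{Lemma density nabla truncated}, $\nabla_K\stackrel{d}{=}Y(s)^{-\alpha}I_1-|\Delta_-Y(s)|^{-\alpha}I_2$ conditionally on the spine jump time $s$, with $I_1,I_2$ two independent copies of $I$ independent of $Y_{[0,s]}$. Using the elementary bound $\mathbb{E}|X|^{-\beta}\leq C_\beta(\|f_X\|_\infty+1)$ for $\beta<1$ together with $\|k\|_\infty<\infty$ (Lemma~\ref{properties of k}), one extracts
\begin{align*}
\widehat{\mathbb{E}}\bigl(|\nabla_K|^{-\beta}\,\big|\,Y_{[0,s]}\bigr)\;\leq\; C\bigl(Y(s)\vee|\Delta_-Y(s)|\bigr)^{\alpha\beta}.
\end{align*}
Applying the compensation formula for the Poisson point process of jumps of $\eta$, together with the truncation bound $\mathds{1}_{\{\sup_{t\leq s}\eta(t)\leq\log K\}}e^{(\omega_-+\alpha\beta)\eta(s-)}\leq K^{\omega_-+\alpha\beta-p}e^{p\eta(s-)}$ for some $p\in\bigl(0,(\omega_+-\omega_-)\wedge(\omega_-+\alpha\beta)\bigr)$ (non-empty because $\omega_-+\alpha\beta>0$), reduces the task to finiteness of $\int_0^\infty e^{\kappa(\omega_-+p)s}\mathrm{d}s<\infty$ and of jump integrals against $\Pi=e^{\omega_-y}(\Lambda+\widetilde{\Lambda})(\mathrm{d}y)$. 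The main obstacle is that a naive estimate of the $\widetilde{\Lambda}$-part of these jump integrals would demand $\int(1-e^{y'})^{\omega_-+\alpha\beta}\Lambda(\mathrm{d}y')<\infty$, which is not guaranteed since $\omega_-+\alpha\beta<\omega_-$ and $\kappa$ need not extend below $\omega_-$. I would sidestep this by splitting the sum over jumps according to whether $|\Delta\eta(s)|\leq\log 2$ or $>\log 2$: in the small-jump regime one uses the factor $|\Delta_-Y(s)|^{\omega_-}Y(s)^{\alpha\beta}$, whose $\Lambda$-exponent remains $\omega_-$ (covered by $\kappa(\omega_-)=0$), while in the large-jump regime one uses $|\Delta_-Y(s)|^{\omega_-+\alpha\beta}$, for which $(1-e^y)\in(1/2,1)$ is bounded away from $0$ and $1$, so the awkward exponent $\omega_-+\alpha\beta$ becomes harmless. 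Letting $K\to\infty$ along a sequence with $\mathcal{P}_1(\mathcal{A}=\mathcal{A}_K)\to 1$ transfers the $\beta$-energy bound from $\mathrm{d}A_K$ to $\mathrm{d}A$, concluding the proof.
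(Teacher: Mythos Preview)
Your upper bound is exactly the paper's: push $\mathrm{d}A$ forward from $\mathcal{L}(\mathcal{T})$ through the $1$-Lipschitz height map and invoke Rembart--Winkel.

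For the lower bound your argument is correct but takes a different route from the paper. You recycle the truncation machinery $(\mathcal{A}_K,\check{\mathcal{P}}_1^{(K)})$ of Section~\ref{Section Regularity of the intrinsic area process} in order to make $\mathcal{M}_K^2$ integrable, then bound $\check{\mathcal{E}}_1^{(K)}(|\nabla_K|^{-\beta})$ by the elementary estimate $\widehat{\mathbb{E}}(|aI_1-bI_2|^{-\beta})\leq C(a\vee b)^{-\beta}$ and the $K$-dependent exponent trick $e^{(\omega_-+\alpha\beta)\eta(s-)}\leq K^{\omega_-+\alpha\beta-p}e^{p\eta(s-)}$. The paper instead computes $\mathcal{E}_1(\mathcal{I}_b(\mathrm{d}A))$ \emph{directly}, without truncation: one tilt to $\widehat{\mathcal{P}}_1$ absorbs a single factor $\mathcal{M}$, and the remaining $\int\mathrm{d}A(v)|I_1-v|^{-b}$ is handled by decomposing over subtrees of the spine and using that each subtree contributes $|\Delta_-Y(s)|^{\omega_-}$ in expectation (so integrability of $\mathcal{M}^2$ is never needed). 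After optional projection this leads to the double integral $\iint k(u)k(v)\,|uc^{-\alpha}-v(1-c)^{-\alpha}|^{-b}\,\mathrm{d}u\,\mathrm{d}v$, and the key technical input is Lemma~\ref{Lemma technical inequalities}, which bounds it uniformly in $c\in(0,1)$. The compensation formula then gives finiteness for $b\in\big(\tfrac{2\omega_--\omega_+}{-\alpha},\tfrac{\omega_-}{-\alpha}\big)$, which suffices.

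What each approach buys: the paper's route is shorter and yields the stronger conclusion $\mathcal{E}_1(\mathcal{I}_b(\mathrm{d}A))<\infty$ (not just a.s.\ finiteness), at the cost of the dedicated Lemma~\ref{Lemma technical inequalities}. Your route reuses existing lemmas and avoids that appendix result, but at the price of the truncation layer and the $K\to\infty$ limit. Note also that the ``obstacle'' you flag about the $\widetilde{\Lambda}$-part does not actually arise once you keep the factor $(Y(s)\vee|\Delta_-Y(s)|)^{\alpha\beta}$ rather than $Y(s)^{\alpha\beta}$: bounding $e^{\Delta\eta}\vee(1-e^{\Delta\eta})\geq 1/2$ leaves the jump integrand as $(1-e^y)^{\omega_-}$, whose $\widetilde{\Lambda}$-part equals the $\Lambda$-part by symmetry and is finite by $\kappa(\omega_-)=0$. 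Your jump-size split at $\log 2$ is therefore not needed --- it is equivalent to the max bound you already wrote, and Lemma~\ref{Lemma technical inequalities} in the paper encodes the same idea.
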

	
	\begin{rem}[Hölder continuity]
		Copying the argument of Haas \cite{H04} Proposition 12(i), Theorem \ref{Theorem Hausdorff dimension} directly implies that if $-\omega_+<\alpha\leq -\omega_-$, then $A$ is $\gamma$-Hölder continuous for every $\gamma<-\omega_-/2\alpha$.
	\end{rem}
	
	\subsection{The lower bound}
	
	Frostman's Lemma (see e.g. \cite{F85} Corollary 6.6(a)), that we now recall, is the key to the lower bound.

	\begin{lem}[Frostman's Lemma]\label{Lemma Frostman Lemma}
	Let $b\in\intervalleof{0}{1}$ and let $\mu$ be a finite measure on $\mathbb{R}$. If
	\begin{align*}
		\mathcal{I}_b(\mu):=\int_{\mathbb{R}}\int_{\mathbb{R}}\frac{\mathrm{d}\mu(u)\mathrm{d}\mu(v)}{|u-v|^b}<\infty,
	\end{align*}
	then $\dim_H(\mu)\geq b$.		 
	\end{lem}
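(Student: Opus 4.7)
The plan is to combine Fubini's theorem with the classical mass-distribution principle. As a first step I would rewrite the energy $\mathcal{I}_b(\mu)$ in terms of the $b$-potential of $\mu$: by Fubini,
\begin{equation*}
\mathcal{I}_b(\mu)=\int_{\mathbb{R}} U_b\mu(u)\,\mathrm{d}\mu(u),\qquad\text{where}\quad U_b\mu(u):=\int_{\mathbb{R}}\frac{\mathrm{d}\mu(v)}{|u-v|^b}.
\end{equation*}
Finiteness of $\mathcal{I}_b(\mu)$ forces $U_b\mu<\infty$ at $\mu$-a.e.\ point, so the sublevel sets $E_c:=\{u: U_b\mu(u)\le c\}$ increase, as $c\to\infty$, to a set of full $\mu$-measure. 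Picking $c$ large enough to ensure $\mu(E_c)>0$, I would set $\nu:=\mu|_{E_c}$, which is a nonzero finite measure.

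The crucial second step is a uniform ball estimate. For every $u\in E_c$ and every $r>0$, the trivial inequality $|u-v|\le r$ on $B(u,r)$ yields
\begin{equation*}
\nu(B(u,r))\le \mu(B(u,r))\le r^b\int_{B(u,r)}\frac{\mathrm{d}\mu(v)}{|u-v|^b}\le r^b\, U_b\mu(u)\le c\, r^b.
\end{equation*}
In other words, $\nu$ has a uniform upper $b$-density on its support.

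The final step is the mass-distribution principle: for any Borel set $F$ with $\nu(F)>0$ and any covering $\{B(u_i,r_i)\}_{i\ge 1}$ of $F$ by balls with $u_i\in F$, the above bound gives $\nu(F)\le\sum_i\nu(B(u_i,r_i))\le c\sum_i r_i^b$, so $\sum_i r_i^b\ge \nu(F)/c>0$; taking the infimum over such coverings with radii tending to zero shows $\mathcal{H}^b(F)>0$, hence $\dim_H(F)\ge b$. Letting $c\to\infty$ and using that $\bigcup_c E_c$ is $\mu$-conull concludes the argument.

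The only real hurdle is conceptual rather than technical: one must reconcile the notation $\dim_H(\mu)$ with its standard definitions (typically $\inf\{\dim_H(F):\mu(F)>0\}$, or the essential infimum of the lower local dimension of $\mu$). Under either choice the estimate above delivers the claim immediately, so there is no substantive analytic obstacle beyond Fubini and the standard covering step.
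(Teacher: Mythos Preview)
The paper does not actually prove this lemma: it is simply recalled as a classical result with a pointer to Falconer's textbook (Corollary~6.6(a) in \cite{F85}). Your argument is correct and is precisely the standard proof one finds there --- rewrite the energy as the $\mu$-integral of the $b$-potential, restrict $\mu$ to a sublevel set of the potential to obtain a uniform ball estimate $\nu(B(u,r))\le c\,r^b$, and invoke the mass-distribution principle. The only cosmetic point is that your covering step uses balls centred in $F$; since any covering set of diameter $d$ meeting $F$ sits in such a ball of radius $d$, this changes nothing at the level of Hausdorff dimension.
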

	
	\begin{proof}[\textbf{Proof of Theorem \ref{Theorem Hausdorff dimension}: the lower bound}]
	
	In the light of Lemma \ref{Lemma Frostman Lemma}, it is sufficient to show that
	\begin{align}\label{equation b-energie}
		\mathcal{E}_1(\mathcal{I}_b(\mathrm{d}A))<\infty,
	\end{align}
	for all $b<\frac{\omega_-}{-\alpha}$.
	We write
\begin{align*}
	\mathcal{E}_1(\mathcal{I}_b(\mathrm{d}A))
	&=\mathcal{E}_1\left(\int_0^\infty\int_0^\infty\frac{\mathrm{d}A(u)\mathrm{d}A(v)}{|u-v|^b}\right).\\
	\shortintertext{As previously, we sample a first spine applying Lemma \ref{Lemma A and random leaf} and we get}
	\mathcal{E}_1(\mathcal{I}_b(\mathrm{d}A))
	&=\widehat{\mathcal{E}}_1\left(\mathcal{E}_1\left(\int_0^\infty\frac{\mathrm{d}A(v)}{|\zeta_{\sigma}-v|^b}\Big|(\sigma(t))_{t\leq \zeta_{\sigma}}\right)\right)\\
	&=\widehat{\mathbb{E}}_1\left(\mathcal{E}_1\left(\int_0^\infty\frac{\mathrm{d}A(v)}{|I_1-v|^b}\Big|(Y(t))_{t\leq I_1}\right)\right),
\end{align*}
	where $I_1$ denotes the life time of $Y$. We then decompose $A$ as in the proof of Theorem \ref{absolute continuity}, we write
	\begin{align*}
	\mathcal{E}_1(\mathcal{I}_b(\mathrm{d}A))
	&=\widehat{\mathbb{E}}_1\left(\sum_{s<I_1}\mathcal{E}_{|\Delta_- Y(s)|}\left(\int_s^\infty\frac{\mathrm{d}A_s(v)}{|I_1-v|^b}\Big|(Y(t))_{t\leq I_1}\right)\right),
\end{align*}
	where $A_s$ is the intrinsic area function associated with the restriction of $\mathcal{A}$ to $\partial\mathbb{U}_s$, the leaves of the subtree generated by the cell born at time $s$. In particular, $A_s$ has same conditional distribution as $A$ under $\mathcal{P}_{|\Delta_- Y_s|}$ shifted by $s$ (see \cite{BBCK16} Theorem 4.7). We rewrite the right hand-side above denoting $A_s^*:=A_s(s+\cdot)$ and get
\begin{align*}
	&\mathcal{E}_1(\mathcal{I}_b(\mathrm{d}A))\\
	&\hspace{1cm}=\widehat{\mathbb{E}}_1\left(\sum_{s<I_1}\mathcal{E}_{|\Delta_- Y(s)|}\left(\int_0^\infty\frac{\mathrm{d}A_s^*(v)}{|I_1-s-v)|^b}\Big|(Y(t))_{t\leq I_1}\right)\right)\\
	&\hspace{1cm}=\widehat{\mathbb{E}}_1\left(\sum_{s<I_1}|\Delta_- Y(s)|^{\omega_-}\mathcal{E}_1\left(\int_0^\infty\frac{\mathrm{d}A_s^*(v)}{|I_1-s-|\Delta_- Y(s)|^{-\alpha}v|^b}\Big|(Y(t))_{t\leq I_1}\right)\right),
\end{align*}
	where we applied the self-similarity of $A_s^*$ for the last equality. Using Lemmas \ref{Lemma A and random leaf} and \ref{Lemma spine distributed as Y}, let $I_2$ be a random variable with density $k$ independent of $Y$ and write
\begin{align*}
	\mathcal{E}_1(\mathcal{I}_b(\mathrm{d}A))	
	&=\widehat{\mathbb{E}}_1\left(\sum_{s<I_1}|\Delta_- Y(s)|^{\omega_-}\widehat{\mathbb{E}}_1\left(\big|I_1-s-|\Delta_- Y(s)|^{-\alpha}I_2\big|^{-b}\Big|(Y(t))_{t\leq I_1}\right)\right)\\
	&=\widehat{\mathbb{E}}_1\left(\sum_{s<I_1}|\Delta_- Y(s)|^{\omega_-}\big|I_1-s-|\Delta_- Y(s)|^{-\alpha}I_2\big|^{-b}\right).
\end{align*}
	We use Theorem 57 of \cite{DM2}, justified again by positivity: the optional projection of $s\mapsto|I_1-s-|\Delta_-Y_s|^{-\alpha}I_2|^{-b}$ being given by
	\begin{align*}
		s\mapsto \int_0^\infty\mathrm{d}u\int_0^\infty\mathrm{d}v\frac{k(u)k(v)}{|Y(s)^{-\alpha}u-|\Delta_-Y(s)|^{-\alpha}v|^b}.
	\end{align*}
	We hence obtain
	\begin{align*}
		\mathcal{E}_1(\mathcal{I}_b(A))
		&=\widehat{\mathbb{E}}_1\left(\sum_{s<I_1}|\Delta_-Y(s)|^{\omega_-}\int_0^\infty\mathrm{d}u\int_0^\infty\mathrm{d}v\frac{k(u)k(v)}{|Y(s)^{-\alpha}u-|\Delta_-Y(s)|^{-\alpha}v|^b}\right).
	\end{align*}
	Assuming without loss of generality that $Y$ is homogeneous, we see that the latter is equal to
	\begin{align*}
		&\widehat{\mathbb{E}}_1\Bigg(\sum_{s>0}e^{(\omega_-+\alpha b)\eta(s-)}(1-e^{\Delta_-\eta(s)})^{\omega_-}\\
		&\qquad\qquad\qquad\qquad\times\int_0^\infty\mathrm{d}u\int_0^\infty\mathrm{d}v\frac{k(u)k(v)}{|e^{-\alpha\Delta_-\eta(s)}u-(1-e^{\Delta_-\eta(s)})^{-\alpha}v|^b}\Bigg).
	\end{align*}
	Lemma \ref{Lemma technical inequalities} in Appendix finally yields that
	\begin{align*}	
		\mathcal{E}_1(\mathcal{I}_b(A))
		&\leq C\widehat{\mathbb{E}}_1\left(\sum_{s>0}e^{(\omega_-+\alpha b)\eta(s-)}(1-e^{\Delta_-\eta(s)})^{\omega_-}\right),
	\end{align*}
	where $C$ is a deterministic finite constant. The compensation formula then shows that the righ-hand side above is equal to
	\begin{align*}
		C\int_0^\infty e^{s\kappa(2\omega_-+\alpha b)}\mathrm{d}s\int_{\mathbb{R}_-}(1-e^y)^{\omega_-}e^{\omega_-y}(\Lambda+\widetilde{\Lambda})(\mathrm{d}y).
	\end{align*}
	The last integral being finite by definition of $\omega_-$, we see that this expression is finite whenever $\kappa(2\omega_-+\alpha b)<0$, which is the case if and only if $b\in\intervalleoo{\frac{2\omega_--\omega_+}{-\alpha}}{\frac{\omega_-}{-\alpha}}$ (this interval is never empty since we assume that $\omega_-<\omega_+$). We thus have shown that
	\begin{align*}
		b\in\intervalleoo{\frac{2\omega_--\omega_+}{-\alpha}}{\frac{\omega_-}{-\alpha}}\quad\Rightarrow\quad\eqref{equation b-energie}\quad\Rightarrow\quad\mathcal{I}_b(A)<\infty\quad\text{a.s.},
	\end{align*}
	which by Lemma \ref{Lemma Frostman Lemma} gives the lower bound for any $\alpha\leq\omega_-$.

	\end{proof}

	\subsection{The upper bound}
	
	In the pure fragmentation setting, the analogue of $A$ is the function $M$ of the loss of mass. The upper bound of $\mathrm{dim}_H(\mathrm{d}M)$ has been obtained by Haas and Miermont in \cite{HM04} by constructing the CRT induced by the fragmentation. They first investigated the Hausdorff dimension of the leaves of the tree, then they deduced the upper bound for $\mathrm{dim}_H(\mathrm{d}M)$ using the fact that the image of a set by any surjective Lipschitz mapping (in their case the cumulative height profile) has Hausdorff dimension at most equal to that of the original set (this is a direct consequence of Lemma 1.8 in \cite{F85}). Since Rembart and Winkel \cite{RW16} already provided the Hausdorff dimension of the leaves $\mathcal{L}(\mathcal{T})$ of the CRT, we can use the same argument as Haas and Miermont to obtain the upper bound. For this reason we now work on $(\mathcal{T},\mathcal{A}_{\mathcal{T}})$ instead of $(\overline{\mathbb{U}},\mathcal{A})$.
	
	It is not hard to see from its definition in Section \ref{Section preliminaries} that the height function $\mathrm{ht}$ is Lipschitz with respect to the metric $d$.

	\begin{proof}[\textbf{Proof of the upper bound}]
		Recall that $\mathcal{A}_{\mathcal{T}}$ is supported on $\mathcal{L}(\mathcal{T})$ (more precisely the subset of $\mathcal{L}(\mathcal{T})$ corresponding to leaves in $\partial\mathbb{U}$). 
		By definition of $A$ \eqref{equation def A}, $\mathrm{d}A(\mathrm{ht}(\mathcal{L}(\mathcal{T})))$ is equal to its total mass $\mathcal{M}$. Therefore,
		\begin{align*}
			\mathrm{dim}_H(\mathrm{d}A)\leq\mathrm{dim}_H(\mathrm{ht}(\mathcal{L}(\mathcal{T})))\leq\mathrm{dim}_H(\mathcal{L}(\mathcal{T}))
		\end{align*}
		since $\mathrm{ht}$ is Lipschitz. By Theorem 4.5 in \cite{RW16}, $\mathrm{dim}_H(\mathcal{L}(\mathcal{T}))=-\omega_-/\alpha$, which gives the claim.
	\end{proof}

\section{Application to Boltzmann random planar maps}
In \cite{BBCK16}, the authors showed that by cutting particular Boltzmann random maps at heights, one obtains a collection of cycles whose lengths are described in scaling limit by a specific family of growth-fragmentations with cumulant function of the form
\begin{align*}
	\kappa_\theta(q):=\frac{\cos(\pi(q-\theta))}{\sin(\pi(q-2\theta))}\cdot\frac{\Gamma(q-\theta)}{\Gamma(q-2\theta)},\qquad q\in\intervalleoo{\theta}{2\theta+1},
\end{align*}
with self-similarity index $\alpha=1-\theta$, for some parameter $\theta\in\intervalleof{1}{3/2}$. (The case $\theta=3/2$ corresponds to the Brownian map.)

The Cram\'er hypothesis \eqref{Cramer hypothesis} holds with $\omega_-=\theta+1/2$ and $\omega_+=\theta+3/2$, so that the intrinsic area of the ball of radius $r$ is an absolutely continuous function of $r$, by Theorem \ref{absolute continuity}. The small cycle lengths in the random maps are related to $a$ by Theorem \ref{Theorem fragments approx density a} in this paper and Theorem 6.8 in \cite{BBCK16}.

\section*{Acknowledgement}

I would like to thank Jean Bertoin for his constant support, thorough supervision and genuine kindness. I am also grateful to Bastien Mallein for numerous helpful discussions.

\section*{Appendix}

We state and prove here a technical results on the density $k$ that we have used in the proof of the lower bound in Theorem \ref{Theorem Hausdorff dimension}. 

	\begin{lem}\label{Lemma technical inequalities}
		Let $b,c\in\intervalleoo{0}{1}$. We have that
			\begin{align*}
				\int_0^{\infty}\mathrm{d}u\int_0^{\infty}\mathrm{d}v\frac{k(u)k(v)}{|uc^{-\alpha}-v(1-c)^{-\alpha}|^b}
				&\leq C,
			\end{align*}
			where $C$ is a finite constant not depending on $c$.
	\end{lem}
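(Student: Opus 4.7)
\medskip
\noindent\emph{Proof plan.}

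The plan is to recognise the double integral as $\widehat{\mathbb{E}}_1[|I_1 A - I_2 B|^{-b}]$, where $I_1, I_2$ are two independent copies of the exponential functional $I$ (each with density $k$) and we abbreviate $A:=c^{-\alpha}$, $B:=(1-c)^{-\alpha}$. The integrand is invariant under the simultaneous exchange $u\leftrightarrow v$, $c\leftrightarrow 1-c$, so we may assume without loss of generality that $c\leq 1/2$. Since $-\alpha>0$, this forces $B\geq 2^{\alpha}$, a strictly positive constant depending only on $\alpha$.

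The core step is then to bound the density $h$ of $Z:=I_1 A-I_2 B$ uniformly in $c$. The random variable $I_2 B$ has density $g(y)=k(y/B)/B$ on $(0,\infty)$, so $\|g\|_\infty\leq \|k\|_\infty/B\leq 2^{-\alpha}\|k\|_\infty$. Expressing $h$ as the convolution of the density $f(x)=k(x/A)/A$ of $I_1 A$ with the reflection of $g$, namely $h(z)=\int_{\max(0,z)}^{\infty}f(x)g(x-z)\,\mathrm{d}x$, and using that $f$ integrates to $1$, I conclude that $\|h\|_\infty\leq \|g\|_\infty\leq 2^{-\alpha}\|k\|_\infty$, a bound independent of $c$.

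I would then finish by splitting
\[
\widehat{\mathbb{E}}_1\bigl[|Z|^{-b}\bigr]=\int_{|z|\leq 1}|z|^{-b}h(z)\,\mathrm{d}z+\int_{|z|>1}|z|^{-b}h(z)\,\mathrm{d}z.
\]
On the first piece, $h$ is bounded by $\|h\|_\infty$ and $\int_{-1}^{1}|z|^{-b}\,\mathrm{d}z=2/(1-b)$. On the second piece, $|z|^{-b}\leq 1$ while $h$ is a probability density, so the contribution is at most $1$. Combining yields an upper bound of the form $2^{1-\alpha}\|k\|_\infty/(1-b)+1$, which indeed depends only on $\alpha$, $b$ and $\|k\|_\infty$.

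The main obstacle, or rather the temptation to avoid, is attacking the integral head-on by a region decomposition (splitting according to whether $uA\gtrless vB$, or whether $uA$ and $vB$ are comparable). Such splittings naturally produce terms of the shape $A^{-b}\widehat{\mathbb{E}}_1[I^{-b}]$ that blow up as $c\to 0$, and squeezing uniformity out of them requires delicate quantitative use of $k(0+)=0$. Going through the density of $Z$ sidesteps this entirely: once $B$ is bounded below, the one-line estimate $\|h\|_\infty\leq \|k\|_\infty/B$ does all the work.
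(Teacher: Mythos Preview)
Your proof is correct and follows essentially the same route as the paper's. Both arguments use the $c\leftrightarrow 1-c$ symmetry to bound one scale factor away from zero, use one factor of $k$ to integrate to $1$, and handle the singularity $|z|^{-b}$ by a near/far split combined with $\|k\|_\infty<\infty$; your packaging via the density of $Z=I_1A-I_2B$ and the convolution bound $\|h\|_\infty\le\|g\|_\infty$ is just a cleaner rephrasing of the paper's change of variables and its bound on the inner integral $\int_0^\infty k(v(1-c)^\alpha)|u-v|^{-b}\,\mathrm{d}v$.
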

	
	\begin{proof}
	We have
	\begin{align*}
		\int_0^{\infty}\mathrm{d}u\int_0^{\infty}\mathrm{d}v&\frac{k(u)k(v)}{|uc^{-\alpha}-v(1-c)^{-\alpha}|^b}\\
		&\qquad\qquad=c^\alpha(1-c)^\alpha\int_0^\infty k(uc^\alpha)\mathrm{d}u\int_0^\infty \mathrm{d}v\frac{k(v(1-c)^\alpha)}{|u-v|^b}.
	\end{align*}
	Consider the last integral, we have that
	\begin{align*}
		\int_{0}^{\infty}\frac{k(v(1-c)^\alpha)}{|u-v|^b}\mathrm{d}v
		&\leq\int_{u-1}^{u+1}\frac{||k||_\infty}{|u-v|^b}\mathrm{d}v+\int_0^\infty k(v(1-c)^\alpha)\mathrm{d}v\\
		&\leq\int_{u-1}^{u+1}\frac{||k||_\infty}{|u-v|^b}\mathrm{d}v+\int_0^\infty k(v)\mathrm{d}v\\
		&=C,
	\end{align*}
	where $C$ denotes a constant not depending on $c$.
	This yields that
	\begin{align*}
		\int_0^{\infty}\mathrm{d}u\int_0^{\infty}\mathrm{d}v\frac{k(u)k(v)}{|uc^{-\alpha}-v(1-c)^{-\alpha}|^b}
		&\leq c^\alpha(1-c)^\alpha\int_0^\infty Ck(uc^\alpha)\mathrm{d}u\\
		&=(1-c)^\alpha C
	\end{align*}
	Notice that the same arguments apply when the roles of $c$ and $(1-c)$ are exchanged, which entails that the upper bound that we just obtained holds with $(c\vee(1-c))^\alpha$ instead of $(1-c)^\alpha$. One remarks that $c\vee(1-c)\geq 1/2$ and the claim follows.
	\end{proof}

	For $q>0$, we define $\ell^{q\downarrow}$ the subset of $\ell^q$ of non-increasing null sequences with finite $q$-norm, denoted by $||\cdot||_q$.
	
\begin{lem}[Feller's Property]\label{Lemma Feller}
		The law of the growth-fragmentation $\mathbf{X}$ satisfies the following Feller's property: let $\underline{x}_n$, $n\in\mathbb{N}$ and $\underline{x}$ be elements of $\ell^{\omega_-\downarrow}$ such that $(\underline{x}_n)_{n\geq 1}$ converges in $\ell^{\omega_-\downarrow}$ to $\underline{x}$. Then it holds that 
		\begin{align*}
			\mathcal{P}_{\underline{x}_n}\Rightarrow\mathcal{P}_{\underline{x}},\quad\text{as }n\to\infty,
		\end{align*}
		where $\Rightarrow$ means weak convergence in the sense of finite-dimensional distributions in $\ell^{\omega_-\downarrow}$.
\end{lem}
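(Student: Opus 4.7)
The plan is to reduce the statement to a single-cell Feller property using the branching structure of the growth-fragmentation, and then to control the contribution of small initial cells by an $\omega_-$-moment estimate. By the branching property (\cite{B17} Theorem 2), under $\mathcal{P}_{\underline{x}}$ the process $\mathbf{X}$ is distributed as the non-increasing rearrangement of the disjoint union $\bigsqcup_{i\geq 1}\mathbf{X}^{(i)}$, where the $\mathbf{X}^{(i)}$ are independent, with $\mathbf{X}^{(i)}$ of law $\mathcal{P}_{x_i}$. Weak convergence of $\mathcal{P}_{\underline{x}_n}$ to $\mathcal{P}_{\underline{x}}$ in $\ell^{\omega_-\downarrow}$ can therefore be established by handling each summand and then truncating in $i$.

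For a single starting point, self-similarity gives the identity in distribution $\mathbf{X}(t)\stackrel{d}{=}y\cdot\mathbf{X}^{(1)}(ty^\alpha)$ under $\mathcal{P}_y$ (the scaling acting coordinatewise), where $\mathbf{X}^{(1)}$ is drawn from $\mathcal{P}_1$. Since $\mathbf{X}^{(1)}$ is almost surely continuous at any fixed time $s>0$ (birth- and death-times of cells, being governed by jumps of an underlying L\'evy process, avoid any prescribed deterministic $s$ with probability one), the continuous mapping theorem yields $\mathcal{P}_{y_n}(\mathbf{X}(t)\in\cdot)\Rightarrow\mathcal{P}_y(\mathbf{X}(t)\in\cdot)$ in $\ell^{\omega_-\downarrow}$ whenever $y_n\to y>0$.

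To pass to superpositions, I would exploit the moment bound
\begin{align*}
\mathcal{E}_{\underline{z}}\!\left(\sum_{i\geq 1}X_i(t)^{\omega_-}\right)\leq\|\underline{z}\|_{\omega_-}^{\omega_-},\qquad t\geq 0,
\end{align*}
which follows from the (super)martingale property of $t\mapsto\sum_i X_i(t)^{\omega_-}$ under $\kappa(\omega_-)=0$ (\cite{B17,BBCK16}). Fix $\varepsilon>0$ and choose $N$ so that $\|\underline{x}^{>N}\|_{\omega_-}<\varepsilon$; the $\ell^{\omega_-}$-convergence then forces $\|\underline{x}_n^{>N}\|_{\omega_-}<2\varepsilon$ for large $n$. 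The estimate above controls, in expectation and uniformly in $n$, the $\omega_-$-mass carried at time $t$ by cells descending from the tail $\underline{x}_n^{>N}$. On the truncated initial configurations $\underline{x}_n^{\leq N}$, independence of the $N$ components combined with the single-cell Feller statement yields $\mathbf{X}^{(\underline{x}_n^{\leq N})}(t)\Rightarrow\mathbf{X}^{(\underline{x}^{\leq N})}(t)$ in $\ell^{\omega_-\downarrow}$. A triangle inequality, using that the distance in $\ell^{\omega_-\downarrow}$ between two sequences is controlled by the $\omega_-$-mass of their symmetric difference, then gives one-time weak convergence. Finite-dimensional convergence at times $t_1<\cdots<t_k$ follows by iterating this argument through the Markov property (\cite{B17} Theorem 2).

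The main obstacle is making the last step genuinely quantitative: one has to verify that the topology on $\ell^{\omega_-\downarrow}$ is compatible with the rearrangement of independent superpositions, so that the expected $\omega_-$-mass bound for the ignored tail cells actually translates into closeness in distribution of the full non-increasing sequence. Once this topological compatibility is in place, the argument is straightforward assembly of the two ingredients above.
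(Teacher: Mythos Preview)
Your outline is correct in spirit and shares the paper's overall architecture: decompose along the branching structure, invoke self-similarity for a single initial cell, and use the $\omega_-$-moment bound to make the tail negligible. The genuine difference lies in the truncation scheme. The paper does \emph{not} truncate the initial configuration to its first $N$ entries; instead it introduces, for each cell $k$, the $\epsilon$-truncated growth-fragmentation $\mathbf{X}_{k,\epsilon}$ obtained by killing every fragment once its size drops below $\epsilon$, and controls the Wasserstein distance via a three-term triangle inequality (full vs.\ truncated for $\underline{x}_n$; truncated vs.\ truncated; truncated vs.\ full for $\underline{x}$). The point of this choice is that the $\epsilon$-truncated process carries only finitely many fragments at each time, so the middle comparison reduces to a coupling of finitely many scaled copies of the driving Markov process, where stochastic continuity is immediate. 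Showing that the outer terms are small requires a nontrivial input from \cite{B17}, namely that fragments which dip below $\epsilon$ and later resurface above a fixed level $a$ carry negligible $\omega_-$-mass.

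Your version trades this for a cleaner bookkeeping but pushes the difficulty elsewhere: you need almost-sure continuity of $s\mapsto\mathbf{X}^{(1)}(s)$ in $\ell^{\omega_-\downarrow}$ at a fixed deterministic time. This is true (it follows from the c\`adl\`ag property of $\mathbf{X}$ in $\ell^{\omega_-\downarrow}$ together with total inaccessibility of the jump times), but it is not a one-line remark and is essentially of the same order of difficulty as what you are trying to prove. The paper's $\epsilon$-truncation is precisely the device that makes this continuity step elementary. Your acknowledged ``topological compatibility'' issue is, by contrast, harmless: the non-increasing rearrangement is a contraction for the $\ell^{\omega_-}$ (quasi)metric, so controlling the $\omega_-$-mass of the discarded cells does control the distance between the rearranged sequences.
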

	
	\begin{proof}[Proof of Lemma \ref{Lemma Feller}]
		We denote $\underline{x}_n:=(x_{n,1},x_{n,2},\cdots)$ and $\underline{x}:=(x_1,x_2,\cdots)$.
		Let $\mathbf{X}^{(n)}$ (respectively $\mathbf{Y}$) be a self-similar growth-fragmentation process with distribution $\mathcal{P}_{\underline{x}_n}$ (respectively $\mathcal{P}_{\underline{x}}$). We shall show that the Wasserstein distance between $\mathcal{P}_{\underline{x}_n}$ and $\mathcal{P}_{\underline{x}}$ converges to zero, which will entail the claim.
		Let $t>0$ and write
		\begin{align}\label{equation decomposition GF's}
			\mathcal{E}\left(||\mathbf{X}^{(n)}(t)-\mathbf{Y}(t)||_{\omega_-}^{\omega_-}\right)
			&\leq\mathcal{E}\left(\sum_{k\geq 1}||\mathbf{X}^{(n)}_k(t)-\mathbf{Y}_k(t)||_{\omega_-}^{\omega_-}\right),
		\end{align}
		where $\mathbf{X}^{(n)}_k$ and $\mathbf{Y}_k$ are growth-fragmentations with respective distributions $\mathcal{P}_{x_{n,k}}$, $\mathcal{P}_{x_k}$. In the same vein as in the proof of Proposition 2 in \cite{B17} (viz the branching property for growth-fragmentations), we fix $\epsilon>0$ and define $\mathbf{X}^{(n)}_{k,\epsilon}$, (respectively $\mathbf{Y}_{k,\epsilon}$) the growth-fragmentation obtained from $\mathbf{X}^{(n)}_k$ (respectively $\mathbf{Y}_k$) by killing every fragment - and those it generates in the future - as soon as it reaches a size smaller than $\epsilon$. The triangle inequality on the right-hand side of \eqref{equation decomposition GF's} entails that
		\begin{align}\label{equation middle term GF's}
			\mathcal{E}\left(||\mathbf{X}^{(n)}(t)-\mathbf{Y}(t)||_{\omega_-}^{\omega_-}\right)
			\leq 3^{\omega_-}\left(A_n+B_n+C\right),
		\end{align}
		where
		\begin{align*}
			A_n:=&\mathcal{E}\left(\sum_{k\geq 1}||\mathbf{X}^{(n)}_k(t)-\mathbf{X}^{(n)}_{k,\epsilon}(t)||_{\omega_-}^{\omega_-}\right)\\
			B_n:=&\mathcal{E}\left(\sum_{k\geq 1}||\mathbf{X}^{(n)}_{k,\epsilon}(t)-\mathbf{Y}_{k,\epsilon}(t)||_{\omega_-}^{\omega_-}\right)\\
			C:=&\mathcal{E}\left(\sum_{k\geq 1}||\mathbf{Y}_{k,\epsilon}(t)-\mathbf{Y}_k(t)||_{\omega_-}^{\omega_-}\right).
		\end{align*}
		Recall that $X_j(t)$ is the size of the $j$th largest fragment in a growth-fragmentation at time $t$. We define $X_j^*(t)$ the infimum of the sizes of the ancestors of $X_j(t)$ before time $t$. In particular, if $X_j(t)>0$, then $X_j^*(t)>0$.
		Fix $a>\epsilon$ and write
		\begin{align*}
			A_n
			=\sum_{k\geq 1}\mathds{1}_{\left\{x_{n,k}\leq\epsilon\right\}}&\mathcal{E}_{x_{n,k}}\left(||\mathbf{X}_k^{(n)}(t)||_{\omega_-}^{\omega_-}\right)\\
			+\mathds{1}_{\left\{x_{n,k}>\epsilon\right\}}&\mathcal{E}_{x_{n,k}}\left(\sum_{j\geq 1}X_j(t)^{\omega_-}\mathds{1}_{\left\{0<X_j^*(t)\leq\epsilon,\ X_j(t)\leq a\right\}}\right)\\
			+\mathds{1}_{\left\{x_{n,k}>\epsilon\right\}}&\mathcal{E}_{x_{n,k}}\left(\sum_{j\geq 1}X_j(t)^{\omega_-}\mathds{1}_{\left\{0<X_j^*(t)\leq\epsilon,\ X_j(t)>a\right\}}\right).
		\end{align*}
		The first part is smaller than $\sum_{k\geq 1}x_{n,k}^{\omega_-}\mathds{1}_{\left\{x_{n,k}\leq\epsilon\right\}}$ by Theorem 2 in \cite{B17}. Applying Proposition 4.6 and Theorem 4.7 in \cite{BBCK16}, we bound the second part by 
		\begin{align*}
			&\sum_{k\geq 1}\mathds{1}_{\left\{x_{n,k}>\epsilon\right\}}\mathcal{E}_{x_{n,k}}\left(\sum_{j\geq 1}X_j(t)^{\omega_-}\mathds{1}_{\left\{X_j(t)\leq a\right\}}\right)\\
			&\hspace{3cm}\leq\sum_{k\geq 1}\mathds{1}_{\left\{x_{n,k}>\epsilon\right\}}x_{n,k}^{\omega_-}\widehat{\mathbb{P}}_{x_{n,k}}\left(0<Y(t)\leq a\right).
		\end{align*}
		Hence, since $Y$ is a Feller process, we have that
		\begin{align}\label{equation limsup A_n}
			\limsup_{n\to\infty}A_n
			&\leq\sum_{k\geq 1}\mathds{1}_{\left\{x_k\leq\epsilon\right\}}x_k^{\omega_-}+\mathds{1}_{\left\{x_k\geq\epsilon\right\}}x_k^{\omega_-}\widehat{\mathbb{P}}_{x_k}\left(0<Y(t)\leq a\right)\nonumber\\
			&\hspace{2.5cm}+\mathds{1}_{\left\{x_k\geq\epsilon\right\}}\mathcal{E}_{x_k}\left(\sum_{j\geq 1}X_j(t)^{\omega_-}\mathds{1}_{\left\{0<X_j^*(t)\leq\epsilon,\ X_j(t)\geq a\right\}}\right)
		\end{align}
		(the latter expectations follow from self-similarity, Fatou's Lemma and stochastic continuity).
		The sum of the terms smaller than $\epsilon$ can be taken arbitrarily close to 0, which is also the case of the second part of the sum with $a$, by dominated convergence.
		To see that the last terms of\eqref{equation limsup A_n} can also be taken as small as one wishes with $\epsilon$ and $a$, we refer to the proof of Proposition 2 in \cite{B17}. Therefore, it holds that $\lim_{n\to\infty}A_n=0$. Similar arguments can be used to deal with $C$.
		
		It remains to show that $\lim_{n\to\infty}B_n=0$ and the claim will follow from \eqref{equation middle term GF's}. Using the self-similarity, we have that
		\begin{align*}
			B_n
			&=\mathcal{E}\left(\sum_{k\geq 1}||\mathds{1}_{\left\{x_{n,k}>\epsilon\right\}}x_{n,k}\mathbf{X}^{(n)}_{k,\epsilon/x_{n,k}}(tx_{n,k}^\alpha)-\mathds{1}_{\left\{x_k>\epsilon\right\}}x_k\mathbf{Y}_{k,\epsilon/x_k}(tx_k^\alpha)||_{\omega_-}^{\omega_-}\right).
		\end{align*}
		Now each growth-fragmentation has distribution $\mathcal{P}_1$, and since the Wasserstein metric is given by the infimum over the set of joint distributions, we can assume that $\mathbf{Y}_k=\mathbf{X}^{(n)}_k$. We drop the indicator functions, letting $\mathbf{Y}_{k,\delta}$ be the null sequence whenever $\delta\geq 1$. Since $\underline{x}_n\xrightarrow{\ell^{\omega_-}}\underline{x}$, we just need to show the following convergence:
		\begin{align}\label{equation B_n to 0}
			\sum_{k\geq 1}x_k^{\omega_-}\mathcal{E}\left(||\mathbf{Y}_{k,\epsilon/x_{n,k}}(tx_{n,k}^\alpha)-\mathbf{Y}_{k,\epsilon/x_k}(tx_k^\alpha)||_{\omega_-}^{\omega_-}\right)\underset{n\to\infty}{\longrightarrow}0.
		\end{align}
		The left-hand side is bounded by
		\begin{align*}
			&\sum_{k\geq 1}x_k^{\omega_-}\mathcal{E}\Big(||\mathbf{Y}_{k,\epsilon/x_{n,k}}(tx_{n,k}^{\alpha})-\mathbf{Y}_{k,\epsilon/x_k}(tx_{n,k}^{\alpha})||_{\omega_-}^{\omega_-}\\
			&\hspace{6cm}+||\mathbf{Y}_{k,\epsilon/x_k}(tx_{n,k}^{\alpha})-\mathbf{Y}_{k,\epsilon/x_k}(tx_k^{\alpha})||_{\omega_-}^{\omega_-}\Big).
		\end{align*}
		The second part converges to 0 as $n\to\infty$ by stochastic continuity. The first part contains only fragments whose ancestors have minimum size between $\epsilon/(x_{n,k}\vee x_k)$ and $\epsilon/(x_{n,k}\wedge x_k)$. The dominated convergence theorem yields the claim, that is \eqref{equation B_n to 0} holds, which concludes the proof. 
	\end{proof}

	\bibliographystyle{plain}
	\bibliography{biblio}		
		
\end{document}